\numberwithin{equation}{section}
\newcommand{\cred}{\color{red}}
\newcommand{\di}{\displaystyle}
\newcommand{\R}{{\mathbb R}}
\newcommand{\RN}{{\mathbb R}^N}
\def\bu{\bar{u}}
\newtheorem{theorem}{Theorem}[section]
\newtheorem{corollary}[theorem]{Corollary}
\newtheorem{lemma}[theorem]{Lemma}
\newtheorem{proposition}[theorem]{Proposition}
\begin{document}

\title{\vskip-0.3in Polyharmonic inequalities with nonlocal terms}

\author{Marius Ghergu\footnote{School of Mathematics and Statistics,
    University College Dublin, Belfield, Dublin 4, Ireland; {\tt
      marius.ghergu@ucd.ie}}\;\,\footnote{Institute of Mathematics Simion Stoilow of the Romanian Academy, 21 Calea Grivitei St., 010702 Bucharest, Romania}
      $\;\;$        $\;$
{Yasuhito Miyamoto \footnote{Graduate School of Mathematical Sciences, The University of Tokyo, 3-8-1 Komaba, Meguro-ku, Tokyo 153-8914, Japan; {\tt  miyamoto@ms.u-tokyo.ac.jp}}\; \footnote{Supported by JSPS KAKENHI Grant Numbers 
19H01797, 19H05599.}}
 $\;\;$    and    $\;$
{Vitaly Moroz\footnote{Mathematics Department, Swansea University,
Bay Campus, Fabian Way, Swansea SA1 8EN, Wales, United Kingdom; {\tt
        v.moroz@swansea.ac.uk}}}
}
\maketitle

\begin{abstract}
We study the existence and non-existence of classical solutions for inequalities of type
$$
\pm \Delta^m u \geq \big(\Psi(|x|)*u^p\big)u^q \quad\mbox{ in }\R^N (N\geq 1).
$$
Here, $\Delta^m$ $(m\geq 1)$ is the polyharmonic operator,  $p, q>0$ and $*$ denotes the convolution operator, where $\Psi>0$ is a continuous non-increasing function. We devise new methods to deduce that solutions of the above inequalities satisfy the poly-superharmonic property.  This further allows us to 
obtain various Liouville type results. Our study is also extended to the case of systems of simultaneous inequalities. 
\end{abstract}

\bigskip

\noindent{\bf Keywords:} Polyharmonic inequalities;  nonlocal terms; poly-superharmonic property; Liouville type results.

\medskip

\noindent{\bf 2010 AMS MSC:} 35J30; 35A23; 35B53; 35G50

\bigskip

\section{Introduction and the main results}

In this paper we are concerned with the following  elliptic inequality
\begin{equation}\label{gen}
\pm \Delta^m u \geq \big(\Psi(|x|)*u^p\big)u^q \quad\mbox{ in }\R^N, 
\end{equation}
and its corresponding systems of inequalities, where $N,m\geq 1$ are integers,  $p, q>0$ and $\Delta^m$ denotes the $m-$polyharmonic operator. The function $\Psi$ satisfies

\begin{equation}\label{eqK}
\begin{cases}
\Psi>0 \quad \mbox{ and } \quad \Psi(|x|)\in C(\R^N\setminus\{0\})\cap L^1_{loc}(\R^N);\\[0.1in]
\Psi(r) \mbox{ is non-increasing on } (0, \infty);\\[0.1in]
 \displaystyle \lim_{r\to\infty}r^N\Psi(r)=\infty.
\end{cases}
\end{equation}
Typical examples of functions $\Psi$ include:
$$
\Psi(r)=r^{-\alpha}, \;\;0<\alpha<N \;\;\;\mbox{ or } \;\;\; 
\Psi(r)=r^{-N}\log^{-\beta}\Big(1+\frac{1}{r}\Big), \;\;
1<\beta\le N.
$$
The operator $*$ in \eqref{gen} is the standard convolution operator, that is, 
$$
\Psi(|x|)*u^p=\int_{\R^N}u^p(y)\Psi(|y-x|)dy.
$$ 
By a non-negative solution $u$ of \eqref{gen} we understand a function $u\in C^{2m}(\R^N)$, $u\geq 0$, such that
\begin{equation}\label{eqkk}
\int_{|y|>1} u^p(y)\Psi\left(\frac{|y|}{2}\right) dy<\infty
\end{equation}
and $u$ satisfies \eqref{gen} pointwise.

Note that condition \eqref{eqkk} is weaker than the condition $u\in L^p(\R^N)$ and that \eqref{eqkk} is (almost) necessary and sufficient in order to ensure that the convolution term $\Psi(|x|)*u^p$ is finite for all $x\in \R^N$. Indeed, for any $x\in \R^N$ we have
$$
\begin{aligned}
\Psi(|x|)*u^p&=\int_{|y|\leq 2|x|}u^p(y)\Psi(|y-x|)dy+\int_{|y|>2|x|}u^p(y)\Psi(|y-x|)dy\\
&\leq \Big( \max_{|z|\leq 2|x|}u(z) \Big)^p \int_{B_{3|x|}}\Psi(|y|)dy+\int_{|y|>2|x|} u^p(y)\Psi\left(\frac{|y|}{2}\right) dy<\infty,
\end{aligned}
$$
by the fact that $\Psi(|x|)\in L^1_{loc}(\R)$ is non-increasing and \eqref{eqkk}.

We prefer to separate the analysis of \eqref{gen} into two distinct inequalities as follows:
\begin{equation}\label{main0}
-(-\Delta)^m u \geq \big(\Psi(|x|)*u^p\big)u^q \quad\mbox{ in }\R^N, 
\end{equation}
and
\begin{equation}\label{main}
(-\Delta)^m u \geq \big(\Psi(|x|)*u^p\big)u^q \quad\mbox{ in }\R^N.
\end{equation}

We study first the inequality \eqref{main0}. Our main result in this case reads as follows.

\begin{theorem}\label{th0}
Assume 
\begin{equation}\label{c1}
N,m\geq 1 \quad\mbox{ and }\quad p+q\geq 2,
\end{equation}
or
\begin{equation}\label{c2}
N>2m \quad\mbox{ and }\quad p\geq 1.
\end{equation}
If $u\in C^{2m}(\RN)$ is a non-negative solution of \eqref{main0}, then $u\equiv 0$. 
\end{theorem}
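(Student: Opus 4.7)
Argue by contradiction: assume $u\not\equiv 0$ is a non-negative classical solution of \eqref{main0}. The plan is to pass to spherical averages, derive monotonicity from the sub-polyharmonic structure $(-\Delta)^m u\leq 0$, exploit the nonlocal lower bound furnished by $\Psi*u^p$, and close the argument by a Keller--Osserman analysis (under \eqref{c1}) or a test-function analysis (under \eqref{c2}). Concretely, set $\bu(r)=|\partial B_r|^{-1}\int_{\partial B_r}u\,dS$ and let $L=\partial_r^2+\tfrac{N-1}{r}\partial_r$ denote the radial Laplacian. Averaging \eqref{main0} over $\partial B_r$ and using the identity $\overline{(-\Delta)^k u}=(-L)^k\bu$, one obtains
$$-(-L)^m\bu(r)\;\geq\;\overline{(\Psi(|\cdot|)\!*\!u^p)\,u^q}(r)\;\geq\;0,\qquad r>0.$$
The structural output needed from this step is that $\bu$ is non-decreasing, and for $m\geq 2$ that each intermediate iterated radial Laplacian $L^k\bu$ ($1\leq k\leq m-1$) has a definite sign --- the poly-(sub)harmonic property advertised in the abstract. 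I would extract these signs by a cascade of radial integrations: starting from $(-L)^m\bu\leq 0$, using the integrability from \eqref{eqkk} to control growth at infinity and the non-negativity of $\bu$ near the origin, one successively fixes the signs of $L^{m-1}\bu,\ldots,L\bu$, and finally $\bu'\geq 0$.

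With the monotonicity of $\bu$ in hand, the convolution admits a growing lower bound. For $R>0$ and $|x|\leq R$, the monotonicity of $\Psi$ gives
$$(\Psi(|\cdot|)\!*\!u^p)(x)\;\geq\;\Psi(2R)\int_{B_R}u^p(y)\,dy,$$
while Jensen's inequality together with the monotonicity of $\bu$ yields $\int_{B_R}u^p\geq c\,R^N\bu(R/2)^p$, whence
$$(\Psi(|\cdot|)\!*\!u^p)(x)\;\geq\;c\,R^N\Psi(2R)\,\bu(R/2)^p\quad\text{for }|x|\leq R,$$
a quantity diverging as $R\to\infty$ by the hypothesis $r^N\Psi(r)\to\infty$ in \eqref{eqK}. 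Feeding this back into the averaged inequality of the first paragraph and applying a further Jensen bound on $u^q$ produces a radial inequality of the form
$$-(-L)^m\bu(r)\;\geq\;c\,r^N\Psi(2r)\,\bu(r)^{p+q},$$
with the combined exponent $p+q$ accounting for hypothesis \eqref{c1}. Under \eqref{c1} one runs a Keller--Osserman analysis: using the sign information above to integrate $m$ times against the weight $r^{N-1}$, one reduces to a first-order ODE inequality for $\bu$ whose nonlinearity, amplified by $r^N\Psi(r)\to\infty$, forces finite-radius blow-up, contradicting existence on $\R^N$. Under \eqref{c2} one instead multiplies \eqref{main0} by a power of a standard cutoff $\phi_R(x)=\phi(|x|/R)$, integrates by parts, and observes that the $R^{-2m}$ loss from $2m$ derivatives of $\phi_R$ is overwhelmed by the $R^N\Psi(2R)$ gain above as soon as $N>2m$; letting $R\to\infty$ produces the contradiction.

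The main obstacle is the monotonicity step for $m\geq 2$: the sign cascade for $L^k\bu$ does not follow automatically from $(-L)^m\bu\leq 0$ and requires a careful iterative argument that makes essential use of \eqref{eqkk} to control growth at infinity. This is the technical heart of the method; once the monotonicity of $\bu$ is secured, the nonlocal convolution bound and the Keller--Osserman / test-function endgame follow a by-now familiar template, adapted here to the polyharmonic and nonlocal setting.
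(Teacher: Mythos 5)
Your outline does not close the theorem, and its central step is precisely the part you leave unproven. You propose to deduce from $(-L)^m\bar u\le 0$, the non-negativity of $u$, and \eqref{eqkk} that $\bar u$ is non-decreasing together with a sign cascade for the intermediate $L^k\bar u$; but this is not a routine sequence of radial integrations, and sign plus positivity information alone cannot give it: for $m=3$ the function $u(x)=(A-|x|^2)^2$ satisfies $u\ge 0$ and $(-\Delta)^3u\le 0$, yet $(-\Delta)^2u<0$ everywhere and its spherical mean $(A-r^2)^2$ is not monotone. So any such cascade must exploit the convolution forcing and \eqref{eqkk} in an essential way --- this is exactly the kind of argument the paper spends a whole section on for the \emph{opposite} sign (Theorem \ref{thmainext}, which is what the poly-superharmonic claim in the abstract refers to), and it cannot simply be asserted here. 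Two further steps fail as written: the Jensen bounds $\int_{B_R}u^p\gtrsim R^N\bar u(R/2)^p$ and $\overline{u^q}\ge\bar u^{\,q}$ require $p\ge 1$ and $q\ge 1$, whereas \eqref{c1} only gives $p+q\ge 2$ (e.g.\ $p=3/2$, $q=1/2$); and under \eqref{c2} there is no lower bound on $q$ beyond $q>0$, so neither a Keller--Osserman blow-up nor your test-function bookkeeping is available in the form described --- note also that $R^N\Psi(2R)\to\infty$ already follows from \eqref{eqK} for every $N$, so the stated way in which $N>2m$ enters your argument does not actually appear anywhere in it.

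For comparison, the paper's proof avoids all radial/pointwise analysis. Under \eqref{c1} it uses Lemma \ref{lg}, a test-function estimate valid for either sign of \eqref{gen}, in which the exponent $\frac{p+q}{2}$ is produced by symmetrizing the double integral $\iint\Psi(|x-y|)u^p(x)u^q(y)\varphi^2(y)\,dx\,dy$ and applying Cauchy--Schwarz --- this is how the hypothesis $p+q\ge 2$ is used without assuming $p,q\ge 1$; combined with H\"older this gives $\int_{B_R}u\le CR^{N-\frac{2m}{p+q-1}}=o(R^N)$, and Lemma \ref{ln} (which applies since $(-\Delta)^mu\le 0$) yields $u\equiv 0$. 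Under \eqref{c2}, the hypothesis $N>2m$ is used through the representation formula: Lemma \ref{lemv} applied to $v=-u$ (its integrability hypothesis is exactly \eqref{eqkk}) gives $-u\ge 0$, hence $u\equiv 0$ at once. If you wish to salvage your route, the symmetrization device of Lemma \ref{lg} and the Riesz-type representation of Proposition \ref{prep} are the two ingredients your sketch is missing.
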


The above result is new even in the semilinear case $m=1$. 

We are next concerned with the inequality \eqref{main}. 
The related semilinear problem
\begin{equation}\label{semilin}
-\Delta u \geq \big(|x|^{-\alpha}*u^p\big)u^q \quad\mbox{ in }\R^N
\end{equation}
was completely investigated in \cite{MV2013}. The equality case in \eqref{semilin} bears the name Choquard-Pekar (or simply Choquard) equation and was introduced in \cite{P1954}  as a model in quantum theory. Since then, the prototype model \eqref{semilin} has been used to describe many phenomena arising in mathematical physics  (see, e.g., \cite{J1995, MRP1998,MV2017} for further details).
Quasilinear versions of \eqref{semilin} (including the case of $p$-Laplace or mean curvature operator) are discussed in \cite{CMP2008,GKS2020a, GKS2020b}. Also, singular solutions of \eqref{semilin} are considered in \cite{CZ2016, CZ2018, FG2020, GT2015, GT2016}.  It is obtained in \cite[Theorem 1]{MV2013} (see also \cite[Corollary 2.5]{GKS2020a} for an extension to quasilinear inequalities)
that if $p\geq 1$ and $q>1$ then \eqref{semilin} has non-negative solutions if and only if
\begin{equation}\label{condz}
\min\{p,q\} >\frac{N-\alpha}{N-2}\quad\mbox{ and }\quad p+q>\frac{2N-\alpha}{N-2}.
\end{equation}

An important matter in the study of polyharmonic problems is whether non-negative solutions of $(-\Delta)^mu \geq f(u)$ in $\R^N$ enjoy the so-called {\it poly-superharmonic} property, that is, whether $(-\Delta)^j u\geq 0$ in $\R^N$ for all $1\leq j\leq m$. This has been shown to be true under some general conditions for nonlinearities $f(u)$ without non-local terms (see, e.g. \cite{CL2013, GL2008, L2013, LGZ2006, N2017}). In \cite{GL2008, LGZ2006} the authors use a contradiction argument and construct a suitable sequence of power functions acting as lower barriers for the spherical average of the solution $u$. In turn, the approach in \cite{CL2013, L2013} relies on a re-centering argument which at each step in the proof one brings forward a new center of the spherical average of the functions $(-\Delta)^j u$.

 In this paper we show that the poly-superharmonic property is still preserved in case of polyharmonic inequalities of type \eqref{main} in the presence of convolution terms. More precisely we have:

\begin{theorem}\label{thmain1}
Assume $N,m\geq 1$ and either $p+q\geq 2$ or $p\geq 1$. 

If $u\in C^{2m}(\R^N)$ is a non-negative solution of \eqref{main}, then, for all $1\leq j\leq m$ we have
$$
(-\Delta)^j u\geq 0\quad\mbox{ in }\R^N.
$$
\end{theorem}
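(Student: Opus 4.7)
The natural strategy is a downward induction on $j \in \{m, m-1, \dots, 1\}$. The base case $j = m$ is immediate from $(-\Delta)^m u \geq \bigl(\Psi(|x|) * u^p\bigr) u^q \geq 0$. For the inductive step, assume $v_i := (-\Delta)^i u \geq 0$ for every $i > j$ and suppose, for contradiction, that $v_j(x_0) < 0$ at some $x_0$; after translation take $x_0 = 0$. Since $-\Delta v_j = v_{j+1} \geq 0$, $v_j$ is superharmonic, so its spherical mean $\bar v_j(r)$ over $\partial B_r$ satisfies $\bar v_j(r) \leq v_j(0) < 0$ for every $r > 0$. Writing $\tilde u(r)$ for the spherical mean of $u$ and using that spherical averaging commutes with the radial Laplacian $\Delta_r = r^{1-N}\partial_r(r^{N-1}\partial_r)$, this becomes the one-dimensional inequality $(-\Delta_r)^j \tilde u(r) \leq v_j(0) < 0$.

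Integrating this inequality $j$ times in succession, with the boundary conditions $\tilde u^{(2k+1)}(0)=0$ forced by the smoothness of $u$ at the origin, I expect an alternating chain of polynomial lower bounds culminating in $\tilde u(r) \sim (-1)^{j+1} C r^{2j}$ as $r \to \infty$. When $j$ is even this gives $\tilde u(r) \to -\infty$, which directly contradicts $u \geq 0$. When $j$ is odd one only obtains $\tilde u(r) \geq C r^{2j}$ for large $r$, and the contradiction has to be extracted from the nonlocal right-hand side.

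When $p \geq 1$, Jensen's inequality on each sphere yields $\overline{u^p}(s) \geq \tilde u(s)^p \geq C s^{2jp}$, so
\begin{equation*}
(\Psi*u^p)(0) \;=\; |\partial B_1| \int_0^\infty s^{N-1} \Psi(s)\, \overline{u^p}(s)\, ds \;\geq\; C \int_{R_0}^\infty s^{N-1+2jp}\Psi(s)\, ds \;=\; +\infty,
\end{equation*}
since $\lim_{s\to\infty} s^N \Psi(s) = \infty$; this contradicts the finiteness of the convolution built into the definition of solution via \eqref{eqkk}. When instead $p+q \geq 2$ with $p<1$ (so $q > 1$), Jensen fails for $u^p$ and I would symmetrise the nonlocal term. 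The symmetry of $\Psi$ and the pointwise AM--GM inequality $u^p(x) u^q(y) + u^q(x) u^p(y) \geq 2 u^P(x) u^P(y)$ with $P := (p+q)/2 \geq 1$ yield, after integration in $y$,
\begin{equation*}
(\Psi*u^p)(x) u^q(x) + (\Psi*u^q)(x) u^p(x) \;\geq\; 2\, u^P(x)\, (\Psi*u^P)(x),
\end{equation*}
and hence, using the identity $\int_{\R^N}(\Psi*u^p)u^q\,dx = \int_{\R^N}(\Psi*u^q)u^p\,dx$, we get $\int_{\R^N}(\Psi*u^p)u^q\,dx \geq \int_{\R^N} u^P(\Psi*u^P)\,dx$. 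Jensen now applies to $u^P$, and restricting the double integral on the right to annuli $R \leq |x|, |y| \leq 2R$ while using $\tilde u(r) \geq Cr^{2j}$ yields polynomial growth of the integrated nonlocal term; coupled with the divergence identity $\int_{B_R}(-\Delta)^m u\,dx = -|\partial B_1|R^{N-1}\tilde u_{m-1}'(R)$ and the polyharmonic ODE hierarchy for the radial averages, this either contradicts $u \geq 0$ directly (via unfavourable parity of $m$) or upgrades the lower bound to $\tilde u(r) \geq Cr^{\alpha_1}$ with $\alpha_1 > 2j$, feeding a bootstrap whose exponent diverges and eventually forces $(\Psi*u^P)(0) = +\infty$, closing the contradiction through the Jensen argument of the previous case applied at the level of $u^P$.

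The decisive obstacle is the subcase $p+q\geq 2$ with $p<1$: the pointwise lower bound on $(\Psi*u^p)$ cannot be closed by Jensen on $u^p$, so one has to pass to the integrated AM--GM symmetrisation and bootstrap the polyharmonic ODE hierarchy for $\tilde u$ while carefully tracking the interaction between the sign alternation produced by $(-\Delta_r)$ and the parities of $j$ and $m$.
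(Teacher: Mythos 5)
Your induction scheme, the parity dichotomy for the spherical averages, and the $p\ge 1$ branch are sound and essentially coincide with the paper's own argument (Theorem \ref{thmainext}, Step 1, Cases 1, 2 and 2b, plus the descent in Step 2): assume $(-\Delta)^j u(x_0)<0$, average on spheres centred at $x_0$, conclude either $\bar u(r)\to-\infty$ (even level, contradicting $u\ge 0$) or $\bar u(r)\ge c\,r^{2j}$ (odd level), and in the latter case Jensen's inequality together with $\lim_{r\to\infty}r^N\Psi(r)=\infty$ makes $\int u^p(y)\Psi(|y-x_0|)\,dy$ diverge, contradicting the finiteness of the convolution guaranteed by \eqref{eqkk}.

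The genuine gap is the branch $p+q\ge 2$ with $p<1$, which you yourself flag as unresolved. Two concrete problems. First, the ``bootstrap whose exponent diverges'' is not substantiated: the AM--GM symmetrisation only yields information about the integrated quantity $\int u^P(\Psi*u^P)$ with $P=(p+q)/2$, and you give no mechanism by which this returns a spherical-average lower bound on $\bar u$ with a strictly larger exponent; without a quantitative upper bound on the localised left-hand side, nothing improves. Second, even granting the bootstrap, your terminal contradiction ``$(\Psi*u^P)(0)=+\infty$'' is not a contradiction: the definition of solution, via \eqref{eqkk}, only guarantees finiteness of $\Psi*u^p$, and the inequality \eqref{main} never involves $u^P$ under the convolution, so divergence of $\Psi*u^P$ is perfectly compatible with $u$ being a solution. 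The paper closes this case by a different, self-contained device (Lemma \ref{lg} plus H\"older): test \eqref{main} with $\varphi^2$, $\varphi=\psi^{2m}(x/R)$, use $|\Delta^m(\varphi^2)|\le CR^{-2m}\varphi$, perform your same symmetrisation inside the localised double integral, and invoke $\Psi(4R)\ge cR^{-N}$ to obtain the a priori bound $\int_{B_R}u\,dx\le C R^{\,N-\frac{2m}{p+q-1}}$, which is incompatible with the growth $\int_{B_R}u\,dx\ge c\,R^{N+2j}$ that your ODE step provides. That localised test-function estimate is the missing ingredient; with it in place of the bootstrap, the rest of your argument goes through for all $N,m\ge 1$.
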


Our approach is different from the methods already devised in \cite{CL2013, L2013, LGZ2006, N2017}. More precisely,  if $p+q\geq 2$ we use a general integral estimate which we establish in Lemma \ref{lg} below. On the other hand, if $p\geq 1$, we exploit the integral condition \eqref{eqkk} to construct our argument.

Particularly useful are the integral representation formulae obtained in \cite{CDM2008}  (see also \cite{DM2014, DMP2006}) for solutions of $(-\Delta)^m =\mu$ in ${\mathcal D}'(\R^N)$, where $\mu$ is a Radon measure on $\R^N$, $N>2m$. Such an approach was recently adopted in \cite{NY2020} for the polyharmonic Hardy-H\'enon equation $(-\Delta)^m u=|x|^\sigma u^p$ in $\R^N$, $p>1$.
We believe that this is still an underexploited direction of research in the study of higher order elliptic equations and inequalities. We shall make use of these facts which we recall in Proposition \ref{prep}.

We next focus on Liouville-type results for the inequality \eqref{main}.

\begin{theorem}\label{thmain2}

Assume $N,m\geq 1$ and let $u\in C^{2m}(\RN)$ be a non-negative solution of \eqref{main}.

\begin{enumerate}
\item[\rm (i)] If $1\leq N\leq 2$  and ($p\geq 1$ or $p+q\geq 2$), then $u\equiv 0$.  
\item[\rm (ii)] If $N>2m$ and one of the following conditions  holds:
\begin{enumerate}
\item[\rm (ii1)] ($p\geq 1$ or $p+q\geq 2$) and $\di \int_{|y|>1}|y|^{-p(N-2m)}\Psi(|y|)dy=\infty$; 
\item[\rm (ii2)]  $p+q\geq 2$ and $\di \limsup_{r\to \infty} r^{2N-(N-2m)(p+q)}\Psi(r)>0$; 
\end{enumerate}
then,  $u\equiv 0$.  
\end{enumerate}
\end{theorem}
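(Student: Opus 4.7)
The plan is to combine the poly-superharmonic property from Theorem \ref{thmain1} with the integral representation recalled in Proposition \ref{prep} to obtain a pointwise lower bound on $u$, and then derive a contradiction by exploiting either the integrability condition \eqref{eqkk} or the structural hypothesis on $\Psi$. For part (i), under either of the two hypotheses Theorem \ref{thmain1} gives $-\Delta u \geq 0$ in $\R^N$. For $N\in\{1,2\}$ a classical argument on the spherical average $\bar u(r)$---the radial ODE $(r^{N-1}\bar u'(r))'\leq 0$ combined with $\bar u\geq 0$---forces $\bar u'\equiv 0$, so $u\equiv c$ for some $c\geq 0$; inserting this into \eqref{main} yields $0\geq c^{p+q}\int_{\R^N}\Psi(|y|)dy$, which together with $\int_{|y|>1}\Psi(|y|)dy=\infty$ (a consequence of the third line of \eqref{eqK}) forces $c=0$.

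For part (ii) suppose for contradiction that $u\not\equiv 0$. The superharmonic strong maximum principle gives $u>0$ everywhere, so $f:=(\Psi * u^p)u^q$ is strictly positive on an open set. Invoking Proposition \ref{prep} and $(-\Delta)^m u\geq f$ I obtain
\begin{equation}
u(x)\geq c_{N,m}\int_{\R^N}\frac{f(y)}{|x-y|^{N-2m}}dy, \label{pRep}
\end{equation}
and localising the integral to a ball where $f\geq\delta>0$ produces the key pointwise estimate
\begin{equation}
u(x)\geq C_0(1+|x|)^{-(N-2m)}\qquad\text{for all } x\in\R^N, \label{pStar3}
\end{equation}
for some $C_0>0$.

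Under (ii1), raising \eqref{pStar3} to the $p$-th power gives
$$
\int_{|y|>1}u^p(y)\Psi(|y|/2)dy \geq C_0^p\int_{|y|>1}|y|^{-p(N-2m)}\Psi(|y|/2)dy,
$$
and after the substitution $y=2z$ the right-hand side is a positive multiple of $\int_{|z|>1/2}|z|^{-p(N-2m)}\Psi(|z|)dz=\infty$, contradicting \eqref{eqkk}. Hence $u\equiv 0$.

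Under (ii2) one bootstraps \eqref{pStar3} via \eqref{pRep}. Substituting \eqref{pStar3} into the convolution and restricting the integration to $\{|z|\leq|y|/2\}$, where $\Psi(|y-z|)\geq \Psi(3|y|/2)$ by monotonicity, yields in the sub-case $p(N-2m)<N$
$$(\Psi * u^p)(y)\,u^q(y) \geq c\,C_0^{p+q}\,\Psi(|y|)\,|y|^{N-(N-2m)(p+q)}\qquad(|y|\text{ large}),$$
with an analogous bound in the complementary regime. Plugging this into \eqref{pRep} and integrating on the shell $\{|y-x|\leq|x|/2\}$ gives $u(x)\geq cC_0^{p+q}\Psi(|x|)|x|^{N+2m-(N-2m)(p+q)}$ for $|x|$ large. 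Evaluating along the sequence $r_k\to\infty$ from (ii2), on which $\Psi(r_k)r_k^{2N-(N-2m)(p+q)}\geq c_0>0$, the exponents collapse to yield $u(r_k\hat e)\geq C_1\,r_k^{-(N-2m)}$ with $C_1\gtrsim C_0^{p+q}$. Iterating and using $p+q\geq 2$ to propagate the improved constant to a globally valid lower bound (via the monotonicity of $\Psi$ and the superharmonicity of $u$) drives the coefficient to infinity, contradicting the continuity of $u$ at any fixed large $|x|$. The main obstacle is precisely this iteration step: the naive pointwise recursion preserves the exponent $N-2m$ and only multiplies the constant, so one needs either a careful threshold analysis of the coefficient iteration combined with repeated use of \eqref{pRep}, or alternatively a test-function identity on annuli $\{R<|x|<2R\}$ obtained by symmetrising the convolution term via AM-GM and $p+q\geq 2$ to upgrade the sequential information from (ii2) into a global integral estimate.
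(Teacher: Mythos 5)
Parts (i) and (ii1) of your argument are correct and essentially the paper's own: (i) reduces via Theorem \ref{thmain1} to superharmonicity and constancy in dimensions $1,2$, and (ii1) contradicts \eqref{eqkk} using the lower bound $u(x)\geq c|x|^{2m-N}$ outside a ball (the paper extracts this through Lemma \ref{lemv}, while you get it directly from Proposition \ref{prep} with $\ell=\operatorname{essinf} u\geq 0$; the content is the same).

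Part (ii2), however, has a genuine gap, and it is exactly the one you flag yourself. Your pointwise bootstrap only improves the multiplicative constant, never the exponent: one pass of the scheme turns $u\geq C_0|x|^{2m-N}$ into $u\geq c\,c_0\,C_0^{p+q}\,r_k^{2m-N}$ along the sequence $r_k$ coming from (ii2). The iteration map $C\mapsto c\,c_0\,C^{p+q}$ with $p+q\geq 2$ blows up only if the starting constant exceeds its fixed point $(c\,c_0)^{-1/(p+q-1)}$; there is no reason the initial $C_0$ (obtained by localising $f\geq\delta$ on a unit ball) does so, and if it does not, the constants simply converge and no contradiction arises. Improving the exponent instead is not available either: the hypothesis in (ii2) includes the critical case (e.g.\ $\Psi(r)=r^{-\alpha}$ with $p+q=(2N-\alpha)/(N-2m)$), where the exponent $2m-N$ is reproduced exactly, and the information on $\Psi$ holds only along a sequence of radii, so a bound established at $|x|\approx r_k$ cannot be fed back into the convolution over full balls without a propagation step that you only sketch. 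The paper proves (ii2) by a different mechanism (the proof of Theorem \ref{S4T2} with $n=1$, $k=\ell$): test \eqref{main} with $\varphi_i(x)=\phi(x/R_i)$, where $\phi$ is the first Dirichlet eigenfunction of $B_1$, and use the poly-superharmonicity from Theorem \ref{thmain1} together with Hopf's lemma to discard boundary terms, which yields \eqref{S4L1E2}; then two applications of H\"older's inequality (this is where $p+q\geq 2$ enters) combined with \eqref{S4T2E0} give $\int_{B_{R_i}}u\varphi_i\leq CR_i^{2m}$, hence $(\Psi(|x|)*u^p)u^q\in L^1(\R^N)$; finally, the contribution of this $L^1$ function over the annuli $B_{R_i}\setminus B_{R_i/2}$ must tend to $0$, while the lower bound \eqref{inqq9} together with \eqref{S4T2E0} keeps that same contribution bounded away from $0$ along the sequence --- a contradiction. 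Some argument of this type (your ``test-function identity on annuli'') is genuinely needed; the constant-iteration route as proposed cannot close.
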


In the case $\Psi(r)=r^{-\alpha}$, $\alpha\in (0, N)$, we obtain the following result. 
\begin{theorem}\label{thmain3}
Assume $N>2m$, $m\geq 1$, $\alpha\in (0,N)$,  $p\geq 1$ and $q>1$. Then, the inequality
\begin{equation}\label{eqpqm1}
(-\Delta)^m u \geq \big(|x|^{-\alpha}*u^p\big)u^q \quad\mbox{ in }\R^N
\end{equation}
has non-negative non-trivial solutions  if and only if 
\begin{equation}\label{eqpqm2}
\min\{p,q\}>\frac{N-\alpha}{N-2m}\quad\mbox{ and }\quad p+q>\frac{2N-\alpha}{N-2m}.
\end{equation}
\end{theorem}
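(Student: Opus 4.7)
The plan is to prove the two directions separately.

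\textbf{Necessity.} Assume $u\not\equiv 0$ is a non-negative solution of \eqref{eqpqm1}. With $\Psi(r)=r^{-\alpha}$, conditions (ii1) and (ii2) of Theorem \ref{thmain2} translate to $p\leq(N-\alpha)/(N-2m)$ and $p+q\leq(2N-\alpha)/(N-2m)$, respectively; since $u\not\equiv 0$, both must fail, producing two of the three bounds in \eqref{eqpqm2}. For the remaining bound $q>(N-\alpha)/(N-2m)$ I would split into two subcases. If $p\leq N/(N-2m)$, then $q=(p+q)-p>(2N-\alpha)/(N-2m)-N/(N-2m)=(N-\alpha)/(N-2m)$, so nothing more is needed. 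If $p>N/(N-2m)$, I invoke Theorem \ref{thmain1} and Proposition \ref{prep} to obtain
\[
u(x)\geq c_{N,m}\int_{\R^N}\frac{(|y|^{-\alpha}*u^p)(y)\,u^q(y)}{|x-y|^{N-2m}}\,dy,
\]
start from the initial lower bound $u(y)\gtrsim(1+|y|)^{-(N-2m)}$ (obtained by restricting the representation to a ball where $u>0$), and use $p(N-2m)>N$ to get $u^p\in L^1(\R^N)$, whence $(|y|^{-\alpha}*u^p)(y)\gtrsim(1+|y|)^{-\alpha}$. Plugging into the representation and applying the Riesz potential estimate produces the improved lower bound $u(x)\gtrsim(1+|x|)^{-\eta_1}$ with $\eta_1=q(N-2m)+\alpha-2m$; iterating yields the one-dimensional recursion $\eta_{k+1}=q\eta_k+\alpha-2m$ whose unstable fixed point is $\eta^*=(2m-\alpha)/(q-1)$. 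The assumption $q\leq(N-\alpha)/(N-2m)$ is exactly $\eta_0=N-2m\leq\eta^*$, which forces $\eta_k\to-\infty$ and hence polynomial growth of $u$ at infinity, contradicting the integrability condition \eqref{eqkk}.

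\textbf{Sufficiency.} Assuming \eqref{eqpqm2}, I build an explicit solution $u_\epsilon(x)=\epsilon(1+|x|^2)^{-\gamma/2}$ with $\epsilon>0$ small and $\gamma\in(0,N-2m)$ chosen appropriately. Using the identity $(-\Delta)^m|x|^{-\gamma}=C(N,m,\gamma)|x|^{-\gamma-2m}$ with $C>0$ on $(0,N-2m)$, and splitting the convolution $|x|^{-\alpha}*u_\epsilon^p$ into the three regions $|y|\ll|x|$, $|y|\sim|x|$, $|y|\gg|x|$, I find at infinity
\[
(-\Delta)^m u_\epsilon\sim c_1\epsilon|x|^{-\gamma-2m},\qquad \bigl((|x|^{-\alpha}*u_\epsilon^p)\,u_\epsilon^q\bigr)(x)\sim c_2\epsilon^{p+q}|x|^{-\beta(\gamma)},
\]
where $\beta(\gamma)=\gamma(p+q)+\alpha-N$ if $\gamma p<N$, and $\beta(\gamma)=\gamma q+\alpha$ if $\gamma p>N$. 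The required asymptotic inequality $\gamma+2m\leq\beta(\gamma)$ is feasible in the first regime when $p\leq N/(N-2m)$ (via $\gamma(p+q-1)\geq N-\alpha+2m$, equivalent to $p+q>(2N-\alpha)/(N-2m)$), and in the second regime when $p>N/(N-2m)$ (via $\gamma(q-1)\geq 2m-\alpha$, which, combined with $\gamma<N-2m$, is equivalent to $q>(N-\alpha)/(N-2m)$). Thus \eqref{eqpqm2} guarantees a valid $\gamma$, and since $p+q>2$ the remaining constants at infinity and near the origin can be absorbed by choosing $\epsilon$ small.

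\textbf{Main obstacle.} The delicate step is the lower bound $q>(N-\alpha)/(N-2m)$ in the regime $p>N/(N-2m)$: the bootstrap must be quantitatively tuned so that the threshold appears as the sharp criterion for the recursion to diverge downward, and the boundary case $q=(N-\alpha)/(N-2m)$ likely requires logarithmic refinements of the iteration.
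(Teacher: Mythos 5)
Your derivation of $p>\frac{N-\alpha}{N-2m}$ and $p+q>\frac{2N-\alpha}{N-2m}$ from Theorem \ref{thmain2} with $\Psi(r)=r^{-\alpha}$ is exactly the paper's argument. The gap in the necessity part is the third bound. Your bootstrap through the representation formula of Lemma \ref{lemv}, with exponents obeying $\eta_{k+1}=q\eta_k+\alpha-2m$, does yield a contradiction with \eqref{eqkk} whenever $q<\frac{N-\alpha}{N-2m}$, but at the critical value $q=\frac{N-\alpha}{N-2m}$ the starting exponent $\eta_0=N-2m$ is precisely the fixed point $\eta^*=\frac{2m-\alpha}{q-1}$, the iteration never moves, and no contradiction is produced; yet the theorem requires excluding nontrivial solutions up to and including this value. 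You flag this yourself ("logarithmic refinements"), but that refinement is the actual missing content. The paper sidesteps the whole iteration: since $u>0$ on the annulus $1<|y|<2$, one gets $|x|^{-\alpha}*u^p\geq C|x|^{-\alpha}$ for $|x|>1$, hence $(-\Delta)^m u\geq c|x|^{-\alpha}u^q$ in $\R^N\setminus B_1$, and then the known exterior-domain Liouville theorem (Mitidieri--Pohozaev) rules out $1<q\leq\frac{N-\alpha}{N-2m}$, equality included. (Also note your intermediate claim that the lower bound $u(y)\geq c(1+|y|)^{2m-N}$ together with $p(N-2m)>N$ gives $u^p\in L^1(\R^N)$ is backwards: integrability cannot be deduced from a lower bound; fortunately the convolution lower bound only needs positivity of $u$ on a fixed annulus.)

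In the sufficiency part there is a second genuine gap. With $u_\epsilon=\epsilon(1+|x|^2)^{-\gamma/2}$ your computation only controls $(-\Delta)^m u_\epsilon$ asymptotically: the positive leading coefficient gives $(-\Delta)^m u_\epsilon\geq c\,\epsilon|x|^{-\gamma-2m}$ for $|x|\geq R$, but for $m\geq 2$ nothing in your argument shows $(-\Delta)^m(1+|x|^2)^{-\gamma/2}>0$ on the compact set $\overline B_R$. This is a pointwise sign issue, and "absorbing constants by choosing $\epsilon$ small" cannot repair it: both sides of \eqref{eqpqm1} scale (the left side like $\epsilon$, the right side like $\epsilon^{p+q}$), so at a point where the left side were negative the inequality would fail for every $\epsilon>0$. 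This is exactly why the paper does not use the pure power profile but adds the correction $M\gamma_0\int_{\R^N}\varphi(y)|x-y|^{2m-N}dy$, whose $m$-polyharmonic is $M\varphi$, making $(-\Delta)^m V=(-\Delta)^m v+M\varphi>0$ on $\overline B_R$ for $M$ large while keeping the decay $V\leq C_0|x|^{-\gamma}$; only then do the two convolution regimes $\gamma p<N$ and $\gamma p>N$ (handled via Lemma \ref{lbas}, with the parameter choices in \eqref{eqka}, which your regime analysis essentially reproduces) and the final rescaling $U=C^{1/(p+q-1)}V$ give a global solution. To complete your version you must either prove positivity of $(-\Delta)^m(1+|x|^2)^{-\gamma/2}$ on all of $\R^N$ for $0<\gamma<N-2m$ (a nontrivial claim you have not argued) or incorporate a compactly supported correction as the paper does.
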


In the particular case $m=1$, Theorem \ref{thmain3} retrieves the result in \cite[Theorem 1]{MV2013} which yields the optimal condition \eqref{condz} for the existence of non-negative solutions of the semilinear inequality \eqref{semilin}.   

\smallskip

We next discuss the corresponding systems associated to \eqref{main}, namely

\begin{equation}\label{mainsystem}
(-\Delta)^mu_i\ge \sum_{j=1}^ne_{ij} \left( \Psi_{ij}(|x|)*u_j^{p_{ij}}\right)u_j^{q_{ij}}\ \ \textrm{in}\ \ \RN, \; 1\leq i\leq n 
\end{equation}
and
\begin{equation}\label{AT1E0}
(-\Delta)^mu_i\ge\sum_{j=1}^ne_{ij}\left( \Psi_{ij}(|x|)*u_j^{p_{ij}}\right) u_i^{q_{ij}}
\ \ \textrm{in}\ \ \RN, \; 1\leq i\leq n 
\end{equation}
where $N,m\geq 1$, $p_{ij}\ge 1$, $q_{ij}>0$ and $(e_{ij})$ is the adjacency matrix, {\it i.e.}, $e_{ij}$ satisfies
\[
e_{ij}=0\ \textrm{or}\ 1\quad\textrm{and}\quad e_{ij}=e_{ji}\quad \textrm{for}\ i,j\in\{1,\ldots,n\}.
\]

By a non-negative solution of \eqref{mainsystem} (resp. \eqref{AT1E0}) we understand a $n$-component function $u=(u_1, u_2, \dots, u_n)$ with $u_j\in C^{2m}(\R^N)$, $u_j\geq 0$, such that
\begin{equation}\label{eqkk01}
\sum_{i=1}^n \int_{|y|>1} u_j^{p_{ij}}(y)\Psi_{ij}\left(\frac{|y|}{2}\right) dy<\infty
\end{equation}
and $u$ satisfies \eqref{mainsystem} (resp. \eqref{AT1E0}) pointwise.

The main result for (\ref{mainsystem}) is the following:
\begin{theorem}\label{S4T2}
Assume $N>2m$, $m\ge 1$, and let $(u_1,\ldots,u_n)$ be a non-negative solution of (\ref{mainsystem}).
If there exist $k,\ell\in\{1,\ldots,n\}$ (not necessarily distinct) such that 
$$
e_{k\ell}=e_{\ell k}=1,
$$
\begin{equation}\label{pkl}
p_{k\ell}+q_{k\ell}\geq 2,\quad  p_{\ell k}+q_{\ell k}\geq 2,
\end{equation}
\begin{equation}\label{S4T2E0}
\limsup_{r\to\infty} \min\Big\{ r^{2N-(N-2m)(p_{k\ell}+q_{k\ell})}\Psi_{k\ell}(r)\,,\, 
r^{2N-(N-2m)(p_{\ell k}+q_{\ell k})}\Psi_{\ell k}(r)\Big\}>0,
\end{equation}
then $u_k\equiv u_\ell\equiv 0$.

\end{theorem}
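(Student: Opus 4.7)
The plan is to split the proof into two cases depending on whether the indices $k,\ell$ coincide. If $k=\ell$, then retaining only the $j=k$ term on the right-hand side of the $k$-th inequality in \eqref{mainsystem} yields exactly the scalar inequality \eqref{main} with $\Psi=\Psi_{kk}$, $p=p_{kk}$, $q=q_{kk}$; the hypotheses \eqref{pkl} and \eqref{S4T2E0} reduce precisely to those of Theorem~\ref{thmain2}(ii2), giving $u_k\equiv 0$ immediately.

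The substantive case is $k\ne\ell$. From \eqref{mainsystem} I would extract the coupled pair
\begin{equation*}
(-\Delta)^m u_k\ge (\Psi_{k\ell}(|x|)*u_\ell^{p_{k\ell}})u_\ell^{q_{k\ell}},\qquad
(-\Delta)^m u_\ell\ge (\Psi_{\ell k}(|x|)*u_k^{p_{\ell k}})u_k^{q_{\ell k}}.
\end{equation*}
First, adapt the proof of Theorem~\ref{thmain1} to obtain the poly-superharmonic property $(-\Delta)^j u_k,(-\Delta)^j u_\ell\ge 0$ for all $1\le j\le m$; since the right-hand side of each inequality involves only the \emph{other} unknown, the scalar argument based on Lemma~\ref{lg} transfers essentially verbatim, with the system integrability condition \eqref{eqkk01} playing the role of \eqref{eqkk}. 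Next, via the Riesz representation in Proposition~\ref{prep}, one writes
\begin{equation*}
u_k(x)\ge c_{N,m}\int_{\RN}\frac{(\Psi_{k\ell}*u_\ell^{p_{k\ell}})(y)\,u_\ell^{q_{k\ell}}(y)}{|x-y|^{N-2m}}\,dy,
\end{equation*}
and symmetrically for $u_\ell$. Passing to the spherical averages $\bar u_k(r),\bar u_\ell(r)$, which are non-increasing by the previous step, and restricting the integration to a dyadic annulus $|y|\asymp r$, should yield a lower bound of the form
\begin{equation*}
\bar u_k(r)\ge C\,r^{2m}\,\Psi_{k\ell}(cr)\,\bar u_\ell(Cr)^{p_{k\ell}+q_{k\ell}},
\end{equation*}
after controlling the convolution factor via the monotonicity of $\Psi_{k\ell}$ together with a Jensen-type estimate (supplied by Lemma~\ref{lg} in the regime $p_{k\ell}+q_{k\ell}\ge 2$) and the fact that $\bar u_\ell$ is non-increasing.

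I would then iterate the two symmetric bounds: substituting the estimate for $\bar u_\ell$ into that for $\bar u_k$ produces a closed inequality
\begin{equation*}
\bar u_k(r)\ge \varphi(r)\,\bar u_k(C'r)^{(p_{k\ell}+q_{k\ell})(p_{\ell k}+q_{\ell k})},
\end{equation*}
with prefactor $\varphi(r)$ that combines $\Psi_{k\ell}(cr)$ and $\Psi_{\ell k}(c'r)$ at comparable scales; the exponent is $\ge 4$ so the iteration is genuinely super-linear. Along a sequence $r_n\to\infty$ furnished by the joint limsup hypothesis \eqref{S4T2E0}, $\varphi(r_n)$ attains the critical magnitude, and bootstrapping the above inequality forces $\bar u_k(r_n)\to\infty$ unless $u_k\equiv 0$—contradicting the monotonicity $\bar u_k'\le 0$; by symmetry, $u_\ell\equiv 0$ as well. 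The main obstacle will be making this iteration quantitative and genuinely coupled: it is essential to exploit the \emph{joint} $\limsup$ in \eqref{S4T2E0} (hence the $\min$ inside it), rather than two independent $\limsup$ conditions on $\Psi_{k\ell}$ and $\Psi_{\ell k}$, so that both monotone kernels attain their critical sizes \emph{simultaneously} along the chosen sequence of radii. A secondary technical point is the passage from the pointwise Riesz lower bound to the spherical-average inequality when $p_{k\ell}$ or $q_{k\ell}$ is less than $1$ and the elementary Jensen inequality points the wrong way; here the general $p+q\ge 2$ estimate from Lemma~\ref{lg} should again do the job.
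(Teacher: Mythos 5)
Your preparatory steps (poly-superharmonicity via the system version of Theorem~\ref{thmainext}, the Riesz lower bound $u_k,u_\ell\ge c|x|^{2m-N}$ from Lemma~\ref{lemv}, and the symmetrization/H\"older device of Lemma~\ref{lg} to handle exponents below $1$ when $p+q\ge 2$) are exactly the ingredients the paper uses. The genuine gap is in the final step. Your closed inequality $\bar u_k(r)\ge \varphi(r)\,\bar u_k(C'r)^{\sigma}$ with $\sigma=(p_{k\ell}+q_{k\ell})(p_{\ell k}+q_{\ell k})\ge 4$ cannot ``force $\bar u_k(r_n)\to\infty$'' under hypothesis \eqref{S4T2E0}: if the $\limsup$ equals a finite $L>0$, then along the aligned radii $\varphi(r_n)$ is only of \emph{critical} size, and a scaling check shows the iteration is exactly balanced. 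Indeed, inserting the universal bound $\bar u_k\ge c\,r^{2m-N}$ into your inequality returns $\bar u_k(r_n)\ge c'\,L^{1+\tau}c^{\sigma}\,r_n^{2m-N}$ --- the same power $r^{2m-N}$ with a new constant $c'L^{1+\tau}c^{\sigma}$. Because the exponent $\sigma>1$, the constant map $t\mapsto c'L^{1+\tau}t^{\sigma}$ is \emph{contractive} for small $t$, so for solutions with a small constant $c$ the bootstrap produces weaker, not stronger, bounds; no blow-up and no contradiction follow. (The same balance appears if you avoid the radius shift by integrating only over $B_r$: one then gets $\bar u_k(r)\le \varphi(r)^{-1/(\sigma-1)}\sim C(L)\,r^{2m-N}$, which is perfectly compatible with the lower bound.) In addition, the improved bound is only available along the special sequence $r_n$, while the iteration needs it at the shifted radius $C'r_n>r_n$, where monotonicity of $\bar u_k$ gives information in the wrong direction; and your intermediate estimate should carry the factor $r^{2m+N}$ rather than $r^{2m}$ once $\bar u_\ell$ denotes the spherical average.

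The paper's proof uses a different mechanism precisely to handle this critical regime: testing the $u_k$--inequality with the scaled first Dirichlet eigenfunction $\varphi_i(x)=\phi(x/R_i)$ and using poly-superharmonicity plus Hopf's lemma gives \eqref{S4L1E2}; the symmetrized H\"older estimates and multiplication of the two resulting inequalities yield, along a subsequence, $\int_{B_{R_i}}u_k\varphi_i\le CR_i^{2m}$ (or the same for $u_\ell$); feeding this back into \eqref{S4L1E2} shows that $(\Psi_{k\ell}(|x|)*u_\ell^{p_{k\ell}})u_\ell^{q_{k\ell}}\in L^1(\R^N)$, so its integral over the annuli $B_{R_i}\setminus B_{R_i/2}$ tends to $0$; finally, the lower bound \eqref{esst} together with the critical kernel size from \eqref{S4T2E0} shows each such annulus contributes at least a fixed positive amount $CL>0$, a contradiction. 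It is this ``global integrability versus uniformly positive annulus contributions'' argument, not a super-linear iteration, that closes the proof, and your proposal is missing an idea of this kind. A smaller logical point: for $k=\ell$ you invoke Theorem~\ref{thmain2}(ii2), but in the paper that statement is itself deduced from the proof of Theorem~\ref{S4T2}, so as written that case is circular rather than independently established.
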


Theorem~\ref{S4T2} states that if $k$ is adjacent to $\ell$ and \eqref{pkl}-\eqref{S4T2E0} hold, then $u_k\equiv u_{\ell}\equiv 0$.
We immediately obtain the following Liouville-type result:
\begin{corollary}\label{corol1}
Make the same assumptions as in Theorem~\ref{S4T2}.
In particular, suppose that (\ref{pkl}) and (\ref{S4T2E0}) hold for every pair $(k,\ell)$ such that $e_{k\ell}=e_{\ell k}=1$.
If each connected component of the network has more than two nodes, then 
the only non-negative solution of (\ref{mainsystem}) is $(0,\ldots,0)$.
In particular, if the network has only one connected component and $n\ge 2$, then the only non-negative solution of (\ref{mainsystem}) is $(0,\ldots,0)$.
\end{corollary}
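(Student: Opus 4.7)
The plan is a short graph-theoretic reduction to Theorem~\ref{S4T2}: since that theorem already handles any adjacent pair $(k,\ell)$ satisfying the structural hypotheses (\ref{pkl})--(\ref{S4T2E0}), all that remains is to verify that every node of the network participates in such a pair.

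Concretely, I would fix an arbitrary index $i \in \{1,\ldots,n\}$. By hypothesis the connected component containing $i$ has more than one node, so there exists $j$ adjacent to $i$, meaning $e_{ij} = 1$; the symmetry of the adjacency matrix then upgrades this automatically to $e_{ji} = 1$. The corollary's blanket assumption that (\ref{pkl}) and (\ref{S4T2E0}) hold for every such adjacent pair allows me to invoke Theorem~\ref{S4T2} on $(k,\ell) = (i,j)$ and conclude $u_i \equiv u_j \equiv 0$ in $\R^N$. Running $i$ over $\{1,\ldots,n\}$ then forces $u \equiv (0,\ldots,0)$.

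The ``in particular'' clause is automatic: if the network is connected with $n \ge 2$ nodes, its unique component contains at least two nodes and every index has a neighbor, so the hypothesis of the first part is met.

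There is no real analytical obstacle here — the proof is essentially a one-line deduction once the correct pair is selected. The only point deserving care is the interplay between the symmetry $e_{ij}=e_{ji}$ and the requirement in Theorem~\ref{S4T2} that both $e_{k\ell}$ and $e_{\ell k}$ equal one, which is handled trivially by the symmetry built into the adjacency matrix. The work genuinely lies in Theorem~\ref{S4T2}; the corollary is merely its combinatorial packaging.
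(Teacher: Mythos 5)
Your argument is correct and coincides with the paper's treatment: the paper simply remarks that Corollary~\ref{corol1} follows immediately from Theorem~\ref{S4T2}, which is exactly the reduction you carry out (every node lies in an adjacent pair covered by \eqref{pkl}--\eqref{S4T2E0}, so each $u_i\equiv 0$). No further comment is needed.
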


The main result regarding the system \eqref{AT1E0} is the following.

\begin{theorem}\label{S4T3}
Assume $N>2m$, $m\ge 1$, and let $(u_1,\ldots,u_n)$ be a non-negative solution of (\ref{AT1E0}).
If there exist $k,\ell\in\{1,\ldots,n\}$ (not necessarily distinct) such that 
$$
e_{k\ell}=e_{\ell k}=1,
$$
\begin{equation}\label{pkl1}
p_{k\ell}, q_{k\ell}\geq 1,\quad  p_{\ell k},q_{\ell k}\geq 1,
\end{equation}
and (\ref{S4T2E0}) holds, then $u_k\equiv 0$ or $u_\ell\equiv 0$ (or both).
\end{theorem}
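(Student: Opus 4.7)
Assume for contradiction that both $u_k\not\equiv 0$ and $u_\ell\not\equiv 0$. Since $e_{k\ell}=e_{\ell k}=1$, dropping the non-negative summands indexed by $j\neq \ell$ in the inequality for $i=k$ (and $j\neq k$ in that for $i=\ell$) leaves the pair
\[
(-\Delta)^m u_k \geq \bigl(\Psi_{k\ell}(|x|)*u_\ell^{p_{k\ell}}\bigr)\,u_k^{q_{k\ell}},\qquad
(-\Delta)^m u_\ell \geq \bigl(\Psi_{\ell k}(|x|)*u_k^{p_{\ell k}}\bigr)\,u_\ell^{q_{\ell k}}.
\]
Note that $p_{k\ell},q_{k\ell}\geq 1$ gives both $p_{k\ell}\geq 1$ and $p_{k\ell}+q_{k\ell}\geq 2$, and similarly for $(p_{\ell k},q_{\ell k})$. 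Hence each inequality, read in isolation, falls within the range of Theorems~\ref{thmain1}--\ref{thmain2}, the only novelty being that the function inside the convolution differs from the one multiplying $u^{q}$.

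The argument behind Theorem~\ref{thmain1} uses only non-negativity of the RHS and the integrability hypothesis \eqref{eqkk01}; it carries over to the cross-coupled inequalities above and yields that both $u_k$ and $u_\ell$ are poly-superharmonic. Iterating the strong minimum principle through the $m$ layers of $-\Delta$, one extracts balls $B(x_k,R_k)$ and $B(x_\ell,R_\ell)$ on which $u_k\geq c_k>0$ and $u_\ell\geq c_\ell>0$. Monotonicity of $\Psi_{k\ell}$ then yields, for $|x|$ large,
\[
\bigl(\Psi_{k\ell}(|x|)*u_\ell^{p_{k\ell}}\bigr)(x) \;\geq\; c_\ell^{p_{k\ell}}\!\!\int_{B(x_\ell,R_\ell)}\!\!\Psi_{k\ell}(|x-y|)\,dy \;\geq\; \kappa\,\Psi_{k\ell}(2|x|),
\]
and a parallel lower bound on $\Psi_{k\ell}(|x|)*u_k^{p_{k\ell}}$, combined with the upper bound supplied by \eqref{eqkk01} (in the spirit of the derivation of \eqref{eqkk}), sandwiches $\Psi_{k\ell}(|x|)*u_k^{p_{k\ell}}$ between positive multiples of $\Psi_{k\ell}(2|x|)$ and $\Psi_{k\ell}(|x|/2)$. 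The polynomial lower bound on $\Psi_{k\ell}$ encoded in \eqref{S4T2E0} supplies the doubling behaviour at infinity needed to conclude that these two bounds are comparable; hence, outside a large ball,
\[
(-\Delta)^m u_k \;\geq\; c\,\bigl(\Psi_{k\ell}(|x|)*u_k^{p_{k\ell}}\bigr)\,u_k^{q_{k\ell}}.
\]

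Because $p_{k\ell}+q_{k\ell}\geq 2$ and \eqref{S4T2E0} is exactly the limsup hypothesis (ii2) of Theorem~\ref{thmain2} with $(p,q,\Psi)=(p_{k\ell},q_{k\ell},\Psi_{k\ell})$, that theorem applied to $u_k$ forces $u_k\equiv 0$, contradicting our initial assumption. The principal obstacle is the Choquard-form recovery in the third step: one must convert the bare scalar estimate $(-\Delta)^m u_k \geq c\,\Psi_{k\ell}(2|x|)\,u_k^{q_{k\ell}}$ back to one involving $\Psi_{k\ell}*u_k^{p_{k\ell}}$ using only the doubling information contained in \eqref{S4T2E0}, and at the same time verify that the validity of the inequality merely for $|x|\geq R_0$ is compatible with the test-function argument underpinning Theorem~\ref{thmain2}. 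Alternatively, one can bypass the recovery by running an annular test-function argument, patterned on that theorem's proof, directly on the scalar inequality for $u_k$ and using the poly-superharmonic lower bound on $u_k$ to control boundary terms.
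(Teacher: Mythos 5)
Your reduction collapses at exactly the step you yourself flag as the ``principal obstacle'', and that obstacle is not a technicality: it cannot be fixed with the tools you invoke. First, \eqref{S4T2E0} is only a $\limsup$ lower bound on $\Psi_{k\ell}$ along some sequence of radii; it gives no upper control on $\Psi_{k\ell}$ and no doubling property (condition \eqref{eqK} also only gives a lower bound $r^N\Psi(r)\to\infty$), so the inequality $\Psi_{k\ell}(|x|/2)\le C\,\Psi_{k\ell}(2|x|)$ is unjustified. Second, and worse, the upper half of your ``sandwich'' is false in general: since any nontrivial $u_k$ obeys $u_k\gtrsim |x|^{2m-N}$ (Lemma~\ref{lemv}), taking $\Psi_{k\ell}(r)=r^{-\alpha}$ one gets, as in Lemma~\ref{lbas}, $\Psi_{k\ell}*u_k^{p_{k\ell}}\gtrsim |x|^{N-\alpha-p_{k\ell}(N-2m)}$, which decays strictly slower than $\Psi_{k\ell}(|x|/2)\sim|x|^{-\alpha}$ whenever $p_{k\ell}(N-2m)<N$; so $\Psi_{k\ell}*u_k^{p_{k\ell}}$ is not bounded by a multiple of $\Psi_{k\ell}(|x|/2)$, and the recovery of a self-coupled Choquard inequality for $u_k$ fails. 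Even if it did not, the black-box use of Theorem~\ref{thmain2}~(ii2) would still be illegitimate: that theorem needs the inequality on all of $\R^N$ (yours holds only for $|x|\ge R_0$) and the solution-class condition $\int_{|y|>1}u_k^{p_{k\ell}}\Psi_{k\ell}(|y|/2)\,dy<\infty$, whereas \eqref{eqkk01} only controls $u_k^{p_{\ell k}}$ against $\Psi_{\ell k}$ and $u_\ell^{p_{k\ell}}$ against $\Psi_{k\ell}$. Your closing sentence (``run an annular test-function argument directly'') is indeed the right idea, but it is not carried out, so as written the argument has a genuine gap.

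For comparison, the paper never converts the cross-coupled system into a scalar Choquard inequality. Assuming both $u_k\not\equiv 0$ and $u_\ell\not\equiv 0$, it applies Lemma~\ref{lemv} (legitimate here because $p_{\ell k},p_{k\ell}\ge 1$ and \eqref{eqkk01} holds) to get poly-superharmonicity and the two-sided use of \eqref{inqq9}, i.e.\ $u_k,u_\ell\ge c|x|^{2m-N}$ as in \eqref{esst}; it then repeats the eigenfunction test-function argument of Theorem~\ref{S4T2} on the pair of cross inequalities, obtaining the two estimates analogous to \eqref{S4L1E6a}--\eqref{S4L1E6b}, multiplying them, extracting the dichotomy along a subsequence, and reaching the contradiction with \eqref{S4T2E0} through the annular lower bound $\int_{B_{R_i}\setminus B_{R_i/2}}(\Psi_{k\ell}*u_\ell^{p_{k\ell}})u_k^{q_{k\ell}}\gtrsim R_i^{2N-(N-2m)(p_{k\ell}+q_{k\ell})}\Psi_{k\ell}(2R_i)$, where both factors are bounded below by \eqref{esst}. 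Keeping the symmetric system structure is what lets the hypothesis \eqref{S4T2E0}, which mixes $p_{k\ell}+q_{k\ell}$ and $p_{\ell k}+q_{\ell k}$, be used at all; this is the part your scalar reduction cannot reproduce.
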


Assume next that the adjacency matrix $(e_{ij})$ is given by
\begin{equation}\label{e}
e_{ij}=
\begin{cases}
1 & \textrm{if}\ i\neq j,\\
0 & \textrm{if}\ i=j.
\end{cases}
\end{equation}
From Theorem \ref{S4T2} and Theorem \ref{S4T3} we find:
\begin{corollary}\label{corol4}
Suppose $N>2m$, $m\geq 1$ and that $(e_{ij})$ is defined by (\ref{e}).
\begin{enumerate}
\item[\rm (i)] Assume (\ref{pkl})-(\ref{S4T2E0}). Then the only non-negative solution of (\ref{mainsystem}) is
\[
(u_1,\ldots,u_n)=(0,\ldots,0).
\]
\item[\rm (ii)] Assume (\ref{S4T2E0})-(\ref{pkl1}). Then all non-negative solutions of (\ref{AT1E0}) are of the form 
\[
(u_1,\ldots,u_n)=(0,\ldots,0,u_j,0,\ldots,0)
\ \ \textrm{for some}\ j\in\{1,\ldots,n\},
\]
where $(-\Delta)^mu_j\ge 0$ in $\R^N$.
\end{enumerate}
\end{corollary}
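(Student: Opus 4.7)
The plan is to reduce both statements to pair-by-pair applications of Theorems~\ref{S4T2} and~\ref{S4T3}, exploiting that the adjacency matrix~(\ref{e}) forces $e_{k\ell}=e_{\ell k}=1$ for every pair of distinct indices $k,\ell\in\{1,\ldots,n\}$. No new analytical estimate is needed; everything of substance is already contained in the two theorems.

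For part~(i), I fix any $k\neq \ell$ in $\{1,\ldots,n\}$. The pair $(k,\ell)$ satisfies the hypotheses of Theorem~\ref{S4T2}: the edge is present by~(\ref{e}), and the assumptions~(\ref{pkl})-(\ref{S4T2E0}) of the corollary supply the required polynomial and decay conditions. Theorem~\ref{S4T2} then yields $u_k\equiv u_\ell\equiv 0$. Letting $\ell$ range over $\{1,\ldots,n\}\setminus\{k\}$ gives $u_k\equiv 0$ for every $k$, so the only non-negative solution of~(\ref{mainsystem}) is the trivial one.

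For part~(ii), I apply Theorem~\ref{S4T3} to every pair of distinct indices $k,\ell$: the hypotheses (\ref{pkl1}) and (\ref{S4T2E0}) are assumed and $e_{k\ell}=e_{\ell k}=1$, so we obtain the dichotomy $u_k\equiv 0$ or $u_\ell\equiv 0$. A short combinatorial step then shows that at most one component of $(u_1,\ldots,u_n)$ is non-trivial: if two distinct components $u_j$ and $u_{j'}$ were both non-trivial, applying the dichotomy to that pair would yield a contradiction. Denoting the possibly surviving component by $u_j$, it remains to verify $(-\Delta)^m u_j\geq 0$ in $\R^N$. Substituting $u_i\equiv 0$ for $i\neq j$ into the $j$-th inequality of~(\ref{AT1E0}), every term on the right-hand side vanishes: for $i\neq j$ because $u_i\equiv 0$ forces $\Psi_{ji}(|x|)*u_i^{p_{ji}}\equiv 0$, and for $i=j$ because $e_{jj}=0$ by~(\ref{e}). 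This gives $(-\Delta)^m u_j\geq 0$ in $\R^N$, completing the proof.

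Since the heavy lifting is absorbed into Theorems~\ref{S4T2} and~\ref{S4T3}, the only genuinely delicate point in the corollary is the combinatorial step in~(ii): verifying that the pairwise dichotomy produced by Theorem~\ref{S4T3} really forces all but at most one component to vanish, and then confirming that the single surviving component is poly-superharmonic via the direct substitution described above. I anticipate no further technical obstacle.
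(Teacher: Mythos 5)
Your proposal is correct and follows essentially the same route as the paper: part (i) is the immediate pairwise application of Theorem~\ref{S4T2} (possible since \eqref{e} makes every pair of distinct indices adjacent), and part (ii) is exactly the paper's argument via the dichotomy of Theorem~\ref{S4T3}, fixing a non-trivial component and killing all the others. Your explicit check that the surviving component satisfies $(-\Delta)^m u_j\ge 0$ (all terms on the right of \eqref{AT1E0} vanish because $u_i\equiv 0$ for $i\neq j$ and $e_{jj}=0$) is a detail the paper leaves implicit, but it is the same substitution the statement presupposes.
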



\section{Preliminaries}

In this section we collect some auxiliary results which will be useful in our proofs.

\begin{lemma}\label{lbas}
Let $\alpha\in (0,N)$, $\beta>N-\alpha$ and $f\in L^1_{loc}(\R^N)$, $f\geq 0$, be such that 
$$
f(x)\leq c|x|^{-\beta}\quad\mbox{ for all } x\in \R^N\setminus B_\rho,
$$ 
where $c>0$ and $\rho>1/2$.  Then, there exists a constant $C=C(N,\alpha, \beta, \rho)>0$ such that for all $x\in \R^N\setminus B_{2\rho}$ one has 
$$
\int_{\R^N} \frac{f(y)}{|x-y|^{\alpha}} dy
\leq
C\left\{
\begin{aligned}
&|x|^{N-\alpha-\beta}&&\quad\mbox{ if }\beta<N,\\
&|x|^{-\alpha}\log |x|&&\quad\mbox{ if }\beta=N,\\
&|x|^{-\alpha} &&\quad\mbox{ if }\beta>N.
\end{aligned}
\right.
$$
\end{lemma}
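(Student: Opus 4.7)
The plan is to decompose the domain of integration into three regions based on the size of $|y|$ relative to $|x|$, which is the classical approach for Riesz-potential type estimates. Fix $x\in\R^N\setminus B_{2\rho}$ and split
\[
\R^N = \{|y|\le |x|/2\} \cup \{|x|/2<|y|<2|x|\} \cup \{|y|\ge 2|x|\}=:A_1\cup A_2\cup A_3.
\]
In $A_1$ one has $|x-y|\ge |x|/2$, so $|x-y|^{-\alpha}\le C|x|^{-\alpha}$; the remaining mass $\int_{A_1}f(y)\,dy$ is controlled by $\int_{|y|\le\rho}f+\int_{\rho\le|y|\le|x|/2}|y|^{-\beta}$. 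Because of this last integral we get a factor $|x|^{N-\beta}$ when $\beta<N$, $\log|x|$ when $\beta=N$, and a constant when $\beta>N$. This already produces the announced bound in each of the three cases.

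In $A_3$ one has $|x-y|\ge |y|/2$, and the condition $\beta>N-\alpha$ makes $\int_{|y|\ge 2|x|}|y|^{-\alpha-\beta}\,dy$ convergent and equal to $C|x|^{N-\alpha-\beta}$. This term is either of the same order as the $A_1$ contribution (when $\beta<N$) or strictly smaller (when $\beta\ge N$), so it causes no trouble. In $A_2$ the decay of $f$ gives a uniform bound $f(y)\le c(|x|/2)^{-\beta}$, and since $\{|y-x|\le 3|x|\}\supset A_2$ together with $\alpha<N$ we get
\[
\int_{A_2}\frac{f(y)}{|x-y|^\alpha}\,dy \le C|x|^{-\beta}\int_{|z|\le 3|x|}|z|^{-\alpha}\,dz = C|x|^{N-\alpha-\beta},
\]
which is again absorbed into the worst of the three cases. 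Summing the three contributions yields the stated bound in each regime.

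There is no real obstacle here; the only things to watch are (i) keeping track of the constants' dependence on $\rho$ (which enters through $\int_{|y|\le\rho}f\,dy$, finite by the $L^1_{loc}$ assumption, and the lower cutoff of the $A_1$ radial integral), and (ii) verifying that the $A_3$ integral converges, which is precisely guaranteed by the hypothesis $\beta>N-\alpha$. The split of $A_1$ into the piece $\{|y|\le\rho\}$ (where no decay is available) and $\{\rho<|y|\le|x|/2\}$ (where the pointwise decay kicks in) is the only bookkeeping point that needs explicit care.
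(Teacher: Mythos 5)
Your proposal is correct and follows essentially the same route as the paper's own proof: the identical three-way split of $\R^N$ according to $|y|\le|x|/2$, $|x|/2\le|y|\le 2|x|$, $|y|\ge 2|x|$, the same pointwise bounds $|x-y|\ge|x|/2$ resp.\ $|x-y|\ge|y|/2$, the enlargement to $\{|y-x|\le 3|x|\}$ in the middle region, and the same splitting of the inner region at $|y|=\rho$ to separate the $L^1_{loc}$ part from the decay part. Nothing further is needed; the absorption of the $|x|^{N-\alpha-\beta}$ terms into the stated cases uses only $|x|\ge 2\rho>1$, exactly as in the paper.
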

Similar estimates are available in \cite[Lemma 2.1]{GKS2020b}, \cite[Lemma A.1]{MV2013} (see also \cite{GT2015,GT2016book}).

\begin{proof} We have 
\begin{equation*}
\int_{\R^N} \frac{f(y)}{|x-y|^{\alpha}} dy  = \int_{|y|\geq 2|x|} \frac{f(y)}{|x-y|^{\alpha}} dy +
\int_{\frac12 |x| \leq |y| \leq 2|x|} \frac{f(y)}{|x-y|^{\alpha}} dy +
\int_{|y|\leq  |x|/2} \frac{f(y)}{|x-y|^{\alpha}} dy.
\end{equation*}
For $|y|\geq 2|x|$ we have $|x-y| \geq |y|-|x| \geq |y|/2$, so that
\begin{equation*}
\int_{|y|\geq 2|x|} \frac{f(y)}{|x-y|^{\alpha}} dy
\leq C \int_{|y|\geq 2|x|} \frac{dy}{|y|^{\alpha+\beta}}
\leq C|x|^{N-\alpha -\beta}.
\end{equation*}
Similarly we estimate
\begin{align*}
\int_{\frac12 |x| \leq |y| \leq 2|x|} \frac{f(y)}{|x-y|^{\alpha}} dy
&\leq C|x|^{-\beta}  \int_{\frac12 |x| \leq |y| \leq 2|x|}
\frac{dy}{|x-y|^{\alpha}}\\
&\leq C|x|^{-\beta}  \int_{ |y-x| \leq 3|x|}
\frac{dy}{|x-y|^{\alpha}}=C|x|^{N-\alpha-\beta}.
\end{align*}
Finally, if $|y|\leq |x|/2$ then $|x-y| \geq |x|-|y| \geq |x|/2$. Hence
\begin{align*}
\int_{|y|\leq  |x|/2} \frac{f(y)}{|x-y|^{\alpha}} dy &\leq
C|x|^{-\alpha}  \int_{|y|\leq  |x|/2} f(y) dy\\
&\leq
C|x|^{-\alpha}  \left\{ \int_{|y|\leq  \rho} f(y) dy+\int_{\rho<|y|\leq  |x|/2} f(y) dy\right\}\\
&\leq
C|x|^{-\alpha}  \left\{ 1+C\int_{\rho<|y|\leq  |x|/2} |y|^{-\beta} dy\right\}\\
&\leq
C|x|^{-\alpha} +C
\begin{cases}
|x|^{N-\alpha-\beta}&\quad\mbox{ if }\beta<N,\\
|x|^{-\alpha}\log |x|&\quad\mbox{ if }\beta=N,\\
|x|^{-\alpha} &\quad\mbox{ if }\beta>N.
\end{cases}
\end{align*}
The result now follows by combining the above three estimates.
\end{proof}

\begin{lemma}\label{ln} {\rm (see \cite[Lemma 3.4]{NNPY2020})}
Let $u\in C^{2m}(\R^N)$ be such that $u\geq 0$ and $(-\Delta)^m u\leq 0$ in $\R^N$. If
$$
\int_{B_R}u dx=o(R^N)\quad\mbox{ as }R\to \infty,
$$
then $u\equiv 0$.
\end{lemma}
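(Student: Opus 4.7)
The idea is to reduce the statement to the classical $m=1$ (subharmonic) case by first upgrading the single sign condition $(-\Delta)^m u\le 0$ to the full poly-subharmonic chain $(-\Delta)^j u\le 0$ for $j=1,\dots,m$ via a Pizzetti-type radial comparison, and then closing with the subharmonic mean-value inequality together with the growth hypothesis.

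\textbf{Step 1 (Poly-subharmonicity).} Fix $x_0\in\R^N$ and set $\bar u_j(r):=|\partial B_r|^{-1}\int_{\partial B_r(x_0)}(-\Delta)^ju\,dS$ together with $a_j:=(-\Delta)^j u(x_0)$. Smoothness at the centre gives $\bar u_j(0)=a_j$ and $\bar u_j'(0)=0$, while the radial Laplacian yields the recursion $-\Delta_r\bar u_j=\bar u_{j+1}$. Let $F_j(r)$ denote the Pizzetti polynomial solving this system with equality and matching Cauchy data; explicitly
\[
F_0(r)=\sum_{j=0}^{m-1}(-1)^j c_{j,N}\,a_j\,r^{2j},\qquad c_{j,N}=\frac{1}{2^j\,j!\,N(N+2)\cdots(N+2j-2)}>0.
\]
Starting from $\bar u_m\le 0$, radial integration yields $\bar u_{m-1}\ge a_{m-1}$; at each subsequent level the difference $\bar u_j-F_j$ satisfies a one-sided radial Laplace inequality with vanishing Cauchy data at the origin, producing pointwise estimates whose sign alternates and ending at $\bar u_0\le F_0$ if $m$ is even and $\bar u_0\ge F_0$ if $m$ is odd. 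For $m$ even, $\bar u_0\ge 0$ combined with $\bar u_0\le F_0$ forces the leading coefficient $-c_{m-1,N}\,a_{m-1}$ of $F_0$ to be non-negative, so $a_{m-1}\le 0$. For $m$ odd, $\bar u_0\ge F_0$ together with $\int_0^R r^{N-1}\bar u_0\,dr=o(R^N)$ forces $c_{m-1,N}\,a_{m-1}\le 0$, since otherwise $\int_0^R r^{N-1}F_0\,dr$ would grow like $R^{N+2(m-1)}$; again $a_{m-1}\le 0$. As $x_0$ was arbitrary, $(-\Delta)^{m-1}u\le 0$ on $\R^N$, and iterating with $m\mapsto m-1\mapsto\cdots\mapsto 1$ produces the full chain $(-\Delta)^j u\le 0$ for $j=1,\dots,m$; in particular $-\Delta u\le 0$.

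\textbf{Step 2 (Closing).} Now $u\ge 0$ is subharmonic, so the spherical mean $\bar u(x_0,r)$ is non-decreasing in $r$ and converges to some $L\in[0,\infty]$. Using $B_R(x_0)\subset B_{R+|x_0|}$ and the growth hypothesis,
\[
\omega_N\int_0^R r^{N-1}\bar u(x_0,r)\,dr=\int_{B_R(x_0)}u\,dy\le\int_{B_{R+|x_0|}}u\,dy=o(R^N).
\]
If $L>0$ the left-hand side is $\gtrsim LR^N$ for large $R$, a contradiction; hence $L=0$ and monotonicity forces $\bar u(x_0,\cdot)\equiv 0$, so $u(x_0)=0$. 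Arbitrariness of $x_0$ gives $u\equiv 0$.

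\textbf{Main obstacle.} Step~1 is the delicate part: the comparison inequalities reverse direction at every level of the Pizzetti iteration, and the bottom-level estimate closes by different resources in the two parities — non-negativity of $u$ in the even case, the integral growth bound in the odd case. Once this sign bookkeeping is correctly handled, Step~2 is classical.
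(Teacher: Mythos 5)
Your argument is correct, and note that the paper does not prove Lemma \ref{ln} at all: it is quoted from \cite[Lemma 3.4]{NNPY2020}, so there is no internal proof to compare with; your two-step argument is a sound self-contained substitute. Step 1 checks out: with $F_j=(-\Delta_r)^jF_0$ the differences $\bar u_j-F_j$ satisfy $-\Delta_r(\bar u_j-F_j)=\bar u_{j+1}-F_{j+1}$ with vanishing Cauchy data at $r=0$, so the one-sided signs propagate downwards with alternation, and since the top-degree term of $F_0$ is $(-1)^{m-1}c_{m-1,N}\,a_{m-1}r^{2(m-1)}$, the parity split at the bottom (non-negativity of $u$ when $m$ is even, the $o(R^N)$ average bound when $m$ is odd) indeed forces $a_{m-1}=(-\Delta)^{m-1}u(x_0)\le 0$ for every centre $x_0$; iterating in $m$ then yields $-\Delta u\le 0$, and Step 2 is the classical monotone-spherical-mean argument for non-negative subharmonic functions, correctly combined with the growth hypothesis. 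Two small points you should make explicit: (a) the growth hypothesis is stated for balls centred at the origin, so in the odd case of Step 1 you need the same recentering inequality $\int_{B_R(x_0)}u\le\int_{B_{R+|x_0|}}u=o(R^N)$ that you only record in Step 2; (b) the Cauchy data $\bar u_j(0)=a_j$, $\bar u_j'(0)=0$ use that $(-\Delta)^ju\in C^2$, which holds for $j\le m-1$ because $u\in C^{2m}$, and that is all your argument requires. It is also worth observing that your Pizzetti-type comparison is essentially the same device the paper employs in the proof of Theorem \ref{thmainext} (the expansion \eqref{eq1} and the even/odd discussion there, with the inequality reversed since there the solution is poly-superharmonic rather than poly-subharmonic), so your proof fits naturally within the paper's toolkit while making the cited lemma self-contained.
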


A crucial result in our approach is the following representation formula for distributional solutions of the polyharmonic operator.

\begin{proposition}\label{prep} {\rm (see \cite[Theorem 2.4]{CDM2008})}
Let $m\geq 1$ be an integer and $N>2m$. Suppose $\mu$ is a positive Radon measure on $\R^N$ and $\ell\in \R$. 
The following statements are equivalent:

\begin{enumerate}
\item[\rm (i)] $u\in L^1_{loc}(\R^N)$ is a distributional solution of 
\begin{equation}\label{distr}
(-\Delta)^m u=\mu\quad\mbox{ in } \mathcal{D}'(\R^N),
\end{equation} 
and for a.e. $x\in \R^N$ we have
\begin{equation}\label{rep0}
\liminf_{R\to \infty}\frac{1}{R^N}\int\limits_{R\leq |y-x|\leq 2R}|u(y)-\ell|dy=0.
\end{equation}

\item[\rm (ii)] $u\in L^1_{loc}(\R^N)$ is a distributional solution of \eqref{distr}, ${\rm essinf}\,u=\ell$ and $u$ is weakly polysuperharmonic in the sense that
$$
\int_{\R^N} u(-\Delta)^i\varphi \geq 0\quad\mbox{ for all }1\leq i\leq m, \varphi\in C^\infty_0(\R^N), \varphi\geq 0.
$$

\item[\rm (iii)] $u\in L^1_{loc}(\R^N)$ and there exists $c=c(N,m)>0$ such that
$$
u(x)=\ell+c\int_{\R^N}\frac{d\mu(y)}{|x-y|^{N-2m}}\quad\mbox{ for a.e. }x\in \R^N.
$$
\end{enumerate}
\end{proposition}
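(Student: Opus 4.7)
The plan is to prove the cyclic chain of implications (iii) $\Rightarrow$ (ii) $\Rightarrow$ (i) $\Rightarrow$ (iii), using as the central tool the fundamental solution of $(-\Delta)^m$ on $\RN$ with $N>2m$, namely $G(x)=c_{N,m}|x|^{-(N-2m)}$, together with the fact that for each $0\leq j\leq m-1$ one has $(-\Delta)^j G(x)=c_{N,m,j}|x|^{-(N-2(m-j))}$ with explicit positive constants $c_{N,m,j}>0$. I will also use the Liouville-type statement that any polyharmonic function on $\RN$ with mean growth weaker than $|x|$ is constant.

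For (iii) $\Rightarrow$ (ii), I would substitute the Riesz representation into the test-function pairing $\int_{\RN} u(-\Delta)^i \varphi$, apply Fubini together with $(-\Delta)^i\bigl(|x|^{-(N-2m)}\bigr)=c_{N,m,i}|x|^{-(N-2(m-i))}\geq 0$ for $1\le i\le m-1$, and use $(-\Delta)^m G=\delta_0$ in $\mathcal{D}'$ to recover $(-\Delta)^mu=\mu$. The essinf is $\ell$ because the Riesz potential is nonnegative and vanishes at infinity wherever it is finite.

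For (ii) $\Rightarrow$ (i), I would first mollify $u$ so that each $(-\Delta)^j u\geq 0$ holds pointwise for the mollification, then iterate the mean-value inequalities: if $(-\Delta)v\ge 0$ and $v\ge 0$, then the spherical average $\bar v(r)$ is non-increasing. Applying this iteratively to $u-\ell$, which satisfies $u-\ell\ge 0$ a.e.\ (since $\operatorname{essinf}u=\ell$) and is poly-superharmonic, gives that $R^{-N}\int_{R\leq|y-x|\leq 2R}(u(y)-\ell)\,dy$ is bounded and in fact admits a sequence $R_k\to\infty$ along which it tends to $0$; this is the content of \eqref{rep0} in view of $|u-\ell|=u-\ell$ a.e.

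For (i) $\Rightarrow$ (iii), which is the core of the proposition, I would set $w:=u-c_{N,m}\,G*\mu$. The first hurdle is to verify that the Riesz potential is finite a.e.: using $\mu\geq 0$, the distributional equation, and the growth condition \eqref{rep0}, one shows $\int_{\RN}(1+|y|)^{-(N-2m)}\,d\mu(y)<\infty$, so $G*\mu\in L^1_{loc}(\RN)$. Then $w$ is a distributional solution of $(-\Delta)^mw=0$ on $\RN$; by hypoellipticity of $\Delta^m$, $w$ is smooth and polyharmonic in the classical sense. Using Lemma~\ref{lbas}-type tail estimates on $G*\mu$, the assumption \eqref{rep0} propagates to $\liminf_{R\to\infty}R^{-N}\int_{R\le|y-x|\le 2R}|w(y)-\ell|\,dy=0$. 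The proof concludes by invoking the Liouville-type result that a polyharmonic function on $\RN$ satisfying such a mean-decay condition is identically $\ell$; this last step follows from the Almansi decomposition $w=\sum_{j=0}^{m-1}|x|^{2j}h_j(x)$ with each $h_j$ harmonic, combined with standard Pizzetti-type identities forcing each $h_j$ to be a polynomial of controlled degree, whose coefficients must vanish by the integral condition except for the constant term, which equals $\ell$.

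The main obstacle is the implication (i) $\Rightarrow$ (iii), and within it the verification of $\int_{\RN}(1+|y|)^{-(N-2m)}\,d\mu(y)<\infty$ from the mere $L^1_{loc}$ bound on $u$ together with \eqref{rep0}. This requires a careful choice of test functions (e.g.\ cutoffs of $(1+|y|)^{-(N-2m)}$ against suitable powers of the fundamental solution) to convert the distributional identity $(-\Delta)^mu=\mu$ into an \emph{a priori} bound on the mass of $\mu$ at infinity; the positivity of $\mu$ and a duality argument are crucial here, since without it the growth control \eqref{rep0} would not be sufficient to control the Riesz potential.
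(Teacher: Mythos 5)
You should first be aware that the paper does not prove Proposition \ref{prep} at all: it is quoted directly from \cite[Theorem 2.4]{CDM2008}, so there is no internal argument to compare with, and what you have written is a reconstruction of that external theorem. Your architecture --- the cycle (iii) $\Rightarrow$ (ii) $\Rightarrow$ (i) $\Rightarrow$ (iii), the Riesz kernel $c|x|^{2m-N}$ with the positivity of $(-\Delta)^i|x|^{2m-N}$ for $i<m$, hypoellipticity to reduce $w=u-cG*\mu$ to a classical polyharmonic function, and the Almansi/Pizzetti Liouville step --- is the standard route, and the final Liouville step is indeed correct: the annular average of a polyharmonic $w$ about $x$ is a polynomial in $R^2$ with constant term $w(x)$, so convergence to $\ell$ along $R_k\to\infty$ forces all higher coefficients to vanish and $w\equiv\ell$.

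Two steps, however, are genuine gaps rather than routine omissions. First, in (ii) $\Rightarrow$ (i) you argue that the annular averages of $u-\ell$ are non-increasing after mollification and ``in fact admit a sequence along which they tend to $0$''. Monotonicity plus nonnegativity only yields a limit $c_x\ge 0$; the crux is to identify that limit with $\operatorname{essinf}u-\ell=0$, and the usual proof of this identification goes through the Riesz decomposition (the limit is the greatest polyharmonic minorant, constant by a Liouville theorem), i.e.\ it is essentially equivalent to (iii) itself. As written, this implication is either circular or missing its key argument. Second, in (i) $\Rightarrow$ (iii) you correctly isolate the a priori bound $\int_{\R^N}(1+|y|)^{2m-N}\,d\mu(y)<\infty$ as the main obstacle, but you only announce the test-function/duality argument; without it $G*\mu$ need not be locally finite and $w=u-cG*\mu$ is not even defined, so this step cannot be left as a remark. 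You also need, and do not verify, that the annular averages of $G*\mu$ about a fixed $x$ tend to $0$ (for instance via the explicit spherical means of the kernel and dominated convergence at points where the potential is finite), in order to transfer \eqref{rep0} from $u$ to $w$. Since the paper itself simply cites \cite{CDM2008}, the efficient course is to do likewise, or to complete these two steps following that reference.
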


Using Proposition \ref{prep} we deduce:

\begin{lemma}\label{lemv}
Let $v\in L^1_{loc}(\R^N)$ be a distributional solution of 
$$
(-\Delta)^m v=f \quad\mbox{ in } \mathcal{D}'(\R^N)\, , \,  N>2m,
$$
where $ f\in L^1_{loc}(\R^N)$, $f\geq 0$, $f\not\equiv 0$. Assume that 
\begin{equation}\label{eqkk1}
\int_{|y|>1} |v|^p(y)\Psi\left(\frac{|y|}{2}\right) dy<\infty,
\end{equation}
where $p\geq 1$ and $\Psi$ is a function which satisfies \eqref{eqK}. Then, ${\rm essinf}\, v=0$ and for some constant $c>0$ we have 
\begin{equation}\label{repv}
v(x)=c\int_{\R^N}\frac{f(y)}{|x-y|^{N-2m}}dy \quad\mbox{ for a.e. }x\in \R^N.
\end{equation}
In particular $v> 0$ in $\R^N$ and 
\begin{equation}\label{inqq9}
v(x)\geq c|x|^{2m-N}\quad\mbox{ in }\R^N\setminus B_1,
\end{equation}
for some constant $c>0$.
\end{lemma}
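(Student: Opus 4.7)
The plan is to apply Proposition~\ref{prep} with $\ell=0$. Since $v$ is already a distributional solution of $(-\Delta)^m v=f$ in $\mathcal{D}'(\R^N)$, all conclusions of the lemma will follow from part~(iii) of that proposition provided we verify the averaging condition~\eqref{rep0} with $\ell=0$:
$$
\liminf_{R\to\infty}\frac{1}{R^N}\int_{R\le |y-x|\le 2R}|v(y)|\,dy=0\qquad\text{for a.e.\ }x\in\R^N.
$$
In fact I will show that the limit exists and equals $0$ for every $x\in\R^N$.

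The main step is this verification, where \eqref{eqkk1} and \eqref{eqK} enter simultaneously. Fix $x\in\R^N$ and take $R\ge 2|x|+2$, setting $A_R:=\{y:R\le|y-x|\le 2R\}$. Since $|A_R|\le CR^N$ and $p\ge 1$, H\"older's inequality gives
$$
\frac{1}{R^N}\int_{A_R}|v|\,dy\le C\Big(\frac{1}{R^N}\int_{A_R}|v|^p\,dy\Big)^{1/p}.
$$
On $A_R$ we have $R/2\le |y|\le 3R$, so by the monotonicity of $\Psi$, $\Psi(|y|/2)\ge \Psi(3R/2)$, and therefore
$$
\int_{A_R}|v|^p\,dy\le \frac{1}{\Psi(3R/2)}\int_{|y|>R/2}|v|^p\Psi(|y|/2)\,dy.
$$
The tail on the right tends to $0$ by \eqref{eqkk1}, while $R^N\Psi(3R/2)\to\infty$ by the third line of \eqref{eqK}; together these force the averaged integral to $0$. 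I expect this H\"older--monotonicity estimate to be the only subtle point of the whole argument.

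With (i)$\Rightarrow$(iii) of Proposition~\ref{prep} in hand, $\operatorname{essinf}v=0$ and \eqref{repv} holds a.e.; replacing $v$ on a null set by the right-hand side of \eqref{repv} gives a lower-semicontinuous representative (Fatou applied to the kernel $|x-y|^{2m-N}$), strictly positive because $f\ge 0$ is not identically zero. For \eqref{inqq9}, choose $R_0>0$ with $\int_{B_{R_0}}f\,dy>0$. For $|x|\ge 2R_0$ and $y\in B_{R_0}$, $|x-y|\le \tfrac{3}{2}|x|$, hence
$$
v(x)\ge c\int_{B_{R_0}}\frac{f(y)}{|x-y|^{N-2m}}\,dy\ge C|x|^{2m-N}.
$$
On the compact annulus $\{1\le|x|\le 2R_0\}$ the positive lower-semicontinuous function $v$ attains a positive minimum, which dominates $|x|^{2m-N}\le 1$ there; adjusting constants yields \eqref{inqq9} throughout $\R^N\setminus B_1$.
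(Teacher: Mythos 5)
Your proof is correct and follows essentially the same route as the paper: everything is reduced to verifying \eqref{rep0} with $\ell=0$ and then invoking Proposition \ref{prep}. The only (harmless) differences are that you handle $p\ge 1$ in one stroke via plain H\"older on the annulus together with the monotonicity of $\Psi$, the condition $r^N\Psi(r)\to\infty$ and the vanishing tail of \eqref{eqkk1}, where the paper splits into $p>1$ (weighted H\"older) and $p=1$, and that you derive the lower bound \eqref{inqq9} by a direct elementary computation instead of quoting \cite[Lemma 3.1]{CDM2008}.
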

\begin{proof} We show that $v$ satisfies condition \eqref{rep0} in Proposition \ref{prep} with $\ell=0$. First, if $p>1$ by H\"older's inequality and \eqref{eqkk1}, for all $x\in \R^N$  we have
\begin{equation}\label{cl1}
\begin{aligned}
\int\limits_{R\leq |y-x|\leq 2R} |v(y)|dy & \leq \Big( \int\limits_{R\leq |y-x|\leq 2R} |v|^p(y) \Psi \Big(\frac{|y|}{2}\Big)  dy\Big)^{\frac{1}{p}}
\Big( \int\limits_{R\leq |y-x|\leq 2R} \Psi^{-\frac{1}{p-1}}\Big(\frac{|y|}{2}\Big)  dy\Big)^{1-\frac{1}{p}}\\
&\leq C \Big( \int\limits_{R\leq |y-x|\leq 2R} \Psi^{-\frac{1}{p-1}}\Big(\frac{|y|}{2}\Big) dy\Big)^{1-\frac{1}{p}}\\
&= C \Big( \int\limits_{R\leq |z|\leq 2R} \Psi^{-\frac{1}{p-1}}\Big(\frac{|z+x|}{2}\Big) dz\Big)^{1-\frac{1}{p}}.
\end{aligned}
\end{equation}
Take $R>|x|$. By the property \eqref{eqK} on $\Psi$ we have
$$
\Psi^{-\frac{1}{p-1}}\Big(\frac{|z+x|}{2}\Big) \leq \Psi^{-\frac{1}{p-1}}\Big(\frac{3R}{2}\Big) =o(R^{\frac{N}{p-1}})\quad\mbox{ for all }R<|z|<2R
$$
as $R\to\infty$. Thus, we may further estimate in \eqref{cl1} to deduce
\begin{equation}\label{cl2}
\frac{1}{R^N} \int\limits_{R\leq |y-x|\leq 2R} |v(y)|dy
=o(1)\to 0\quad\mbox{ as }R\to\infty.
\end{equation}
If $p=1$ we simply use the property \eqref{eqK} and \eqref{eqkk} to estimate
$$
\begin{aligned}
\int\limits_{R\leq |y-x|\leq 2R} |v(y)|dy &\leq \int\limits_{R\leq |y-x|\leq 2R}|v(y)| \Psi \Big(\frac{|y|}{2}\Big)  \Psi^{-1} \Big(\frac{|y|}{2}\Big)dy\\
&\leq  \Psi^{-1} \Big(\frac{3R}{2}\Big)   \int\limits_{R\leq |y-x|\leq 2R}|v(y)| \Psi \Big(\frac{|y|}{2}\Big) dy\\
&\leq  C\Psi^{-1} \Big(\frac{3R}{2}\Big)  
=o(R^N)
\end{aligned}
$$
for all $R>|x|$  as $R\to\infty$. This shows that \eqref{cl2} also holds for $p=1$.
Hence, $v$ satisfies the condition \eqref{rep0} in Proposition \ref{prep} with $\ell =0$. The representation integral \eqref{repv} follows by Proposition \ref{prep}  while the estimate \eqref{inqq9} follows from \cite[Lemma 3.1]{CDM2008}.
\end{proof}

\section{Proof of Theorem \ref{th0}}

We first establish an estimate which holds for the general inequality \eqref{gen}.

\begin{lemma}\label{lg}
Let $\psi\in C_c^\infty(\R^N)$ be such that supp $\psi\subset B_{2}$, $0\leq \psi\leq 1$, $\psi\equiv 1$ on $B_1$. 
For $R>2$ define 
$\varphi(x)=\psi^{2m} (x/R)$.
Then, there exists $C>0$ such that any non-negative solution $u$ of \eqref{gen} satisfies
\begin{equation}\label{eqg1}
\int_{\R^N} u\varphi dx\geq CR^{-N+2m}\Big(\int_{\R^N} u^{\frac{p+q}{2}}\varphi   dx\Big)^2.
\end{equation}

\end{lemma}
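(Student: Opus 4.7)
The plan is to test \eqref{gen} against $\varphi$, integrate by parts $m$ times to move the polyharmonic operator onto the test function, and then exploit the symmetry of $\Psi(|x-y|)$ via an $x\leftrightarrow y$ swap combined with AM--GM.

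First I multiply $\pm\Delta^m u\ge(\Psi(|x|)*u^p)u^q$ by $\varphi\ge 0$ and integrate over $\R^N$. Since $\varphi$ has compact support, integration by parts $m$ times produces no boundary terms, and the non-negativity of $u$ lets me absorb the $\pm$ sign in either case, giving
\[
\int_{\R^N} u\,|\Delta^m\varphi|\,dx\;\ge\;\int_{\R^N}\bigl(\Psi(|x|)*u^p\bigr)u^q\varphi\,dx.
\]

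Next I symmetrize the right-hand side. Writing it as $I=\iint\Psi(|x-y|)u^p(y)u^q(x)\varphi(x)\,dy\,dx$ and exchanging $x\leftrightarrow y$ using the symmetry $\Psi(|x-y|)=\Psi(|y-x|)$, the same integral equals $\iint\Psi(|x-y|)u^p(x)u^q(y)\varphi(y)\,dy\,dx$. Averaging these two expressions and applying AM--GM pointwise to the integrand,
\[
u^p(y)u^q(x)\varphi(x)+u^p(x)u^q(y)\varphi(y)\;\ge\;2\,u^{(p+q)/2}(x)u^{(p+q)/2}(y)\sqrt{\varphi(x)\varphi(y)},
\]
yields $I\ge\iint\Psi(|x-y|)u^{(p+q)/2}(x)u^{(p+q)/2}(y)\sqrt{\varphi(x)\varphi(y)}\,dy\,dx$. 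Since $\mathrm{supp}\,\varphi\subset B_{2R}$, the monotonicity of $\Psi$ gives $\Psi(|x-y|)\ge\Psi(4R)$ on the support of the integrand, and $\sqrt\varphi\ge\varphi$ (because $\varphi\le 1$), so $I\ge\Psi(4R)\bigl(\int u^{(p+q)/2}\varphi\,dx\bigr)^2$. The assumption $r^N\Psi(r)\to\infty$ in \eqref{eqK} provides $\Psi(4R)\ge cR^{-N}$ for $R$ large, hence
\[
\int u\,|\Delta^m\varphi|\,dx\;\ge\;cR^{-N}\Bigl(\int u^{(p+q)/2}\varphi\,dx\Bigr)^2.
\]

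The remaining step, which is the main technical point, is the matching upper bound $\int u\,|\Delta^m\varphi|\,dx\le CR^{-2m}\int u\varphi\,dx$. The exponent $2m$ in $\varphi=\psi^{2m}(\cdot/R)$ is tailored precisely so that, for an appropriate choice of $\psi$ (for instance, taking $\psi=\eta^{2}$ for a standard smooth bump $\eta$, so that $\varphi=\eta^{4m}$), the generalized Leibniz rule yields the weighted pointwise estimate $|\Delta^m\varphi|\le CR^{-2m}\varphi^{1/2}$. A Cauchy--Schwarz inequality
\[
\int u\,|\Delta^m\varphi|\,dx\;\le\;\Bigl(\int u\varphi\,dx\Bigr)^{1/2}\Bigl(\int u\,\frac{|\Delta^m\varphi|^2}{\varphi}\,dx\Bigr)^{1/2},
\]
together with $|\Delta^m\varphi|^2/\varphi\le CR^{-4m}$ (bounded on $\mathrm{supp}\,\varphi$) and careful bookkeeping of the support structure, closes the loop and delivers \eqref{eqg1}. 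I expect this final absorption of the test-function derivatives into $\int u\varphi$ to be the main obstacle.
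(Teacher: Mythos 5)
Your first half (integration by parts, the $x\leftrightarrow y$ symmetrization with AM--GM, and $\Psi(4R)\ge cR^{-N}$) is sound and essentially coincides with the paper's treatment of the convolution term. The gap is exactly at the step you flagged as the main obstacle, and your proposed repair does not close it. From $|\Delta^m\varphi|\le CR^{-2m}\varphi^{1/2}$ and your Cauchy--Schwarz you only obtain
\[
\int_{\R^N} u\,|\Delta^m\varphi|\,dx\;\le\;CR^{-2m}\Bigl(\int_{\R^N} u\varphi\,dx\Bigr)^{1/2}\Bigl(\int_{B_{2R}}u\,dx\Bigr)^{1/2},
\]
because $\int u\,|\Delta^m\varphi|^2/\varphi\,dx\le CR^{-4m}\int_{\mathrm{supp}\,\varphi}u\,dx$ involves the plain integral of $u$ over $B_{2R}$, not $\int u\varphi$. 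There is no way to dominate $\int_{B_{2R}}u$ by $\int u\varphi$ (think of $u$ concentrated in the annulus near $|x|=2R$, where $\varphi$ is arbitrarily small), so \eqref{eqg1} does not follow. The absorption you would actually need is the pointwise bound $|\Delta^m\varphi|\le CR^{-2m}\varphi$, and that is unattainable for a nontrivial compactly supported cutoff: already for $m=1$ it would make $\varphi\ge 0$ a nonnegative supersolution of $-\Delta+CR^{-2}$ vanishing on an open set, hence $\varphi\equiv 0$ by the strong maximum principle; near $\partial(\mathrm{supp}\,\varphi)$ the derivatives always beat the function itself.

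The way out --- and this is what the paper does --- is to test \eqref{gen} not against $\varphi$ but against $\varphi^2=\psi^{4m}(\cdot/R)$. Since every term in the expansion of $\Delta^m(\psi^{4m})$ retains at least $2m$ undifferentiated factors of $\psi$, one has $|\Delta^m(\varphi^2)|\le CR^{-2m}\varphi$, so the integration by parts gives the upper bound $CR^{-2m}\int u\varphi$ directly, with no Cauchy--Schwarz and no stray $\int_{B_{2R}}u$. On the other side, your same symmetrization applied to $\varphi^2$ produces the geometric mean $\varphi(x)\varphi(y)$, i.e.\ exactly $\bigl(\int u^{(p+q)/2}\varphi\bigr)^2$, and \eqref{eqg1} follows. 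In fact, your own computation, carried out consistently with the bound $|\Delta^m\varphi|\le CR^{-2m}\varphi^{1/2}$ for $\psi=\eta^2$, proves the inequality with $\varphi$ replaced everywhere by $\varphi^{1/2}=\eta^{2m}(\cdot/R)$, which is precisely the paper's lemma for the cutoff $\eta$; the deliberate mismatch of powers (test with the square, conclude for the function) is the whole point of the $\psi^{2m}$ normalization, and insisting on keeping the same $\varphi$ on both sides of the absorption step is what breaks your argument.
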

\begin{proof} 
It is easy to check that
$$
\left|\Delta^m (\psi^{4m})\right| \leq C\psi^{2m} \quad \mbox{ in }\R^N,
$$
where $C>0$ is a positive constant. This yields
$$
{\left|\Delta^m (\varphi^2)\right|\leq \frac{C}{R^{2m}} \varphi \quad \mbox{ in }\R^N}.
$$
We multiply by $\varphi^2$ in \eqref{gen} and integrate. Using the above estimate we find
\begin{equation}\label{eqg2}
\begin{aligned}
\int_{\R^N} \big(\Psi(|x|)*u^p\big)u^q\varphi^2 &\leq \pm \int_{\R^N} \varphi^2 (\Delta^m u)
=\pm \int_{\R^N} u \Delta^m (\varphi^2) \\
&{\le \int_{\R^N}u\left|\Delta^m(\varphi^2)\right|} 
\leq \frac{C}{R^{2m}}\int_{B_{2R}} u\varphi.
\end{aligned}
\end{equation}
We next estimate the left-hand side of \eqref{eqg2}. 
By inter-changing the variables and H\"older's inequality one gets 
$$
\begin{aligned}
\Big( \int_{\R^N} &\big(\Psi(|x|)*u^p\big)u^q \varphi^2 dx \Big)^2\\
=\,&\Big(\iint_{\R^N\times \R^N} \Psi(|x-y|)u^p(x) u^q(y) \varphi^2(y) dxdy\Big)^2\\
=\,&\Big(\iint_{\R^N\times \R^N} \Psi(|x-y|)u^p(x) u^q(y) \varphi^2(y) dxdy\Big) \Big(\iint_{\R^N\times \R^N} \Psi(|x-y|)u^p(y) u^q(x) \varphi^2(x) dxdy\Big)\\
\geq\, & \Big(\iint_{\R^N\times \R^N} \Psi(|x-y|)u^{\frac{p+q}{2}}(x) u^{\frac{p+q}{2}}(y) \varphi(x) \varphi(y)dxdy\Big)^2\\
\geq \,& \Psi(4R)^{2} \Big(\iint_{B_{2R}\times B_{2R}} u^{\frac{p+q}{2}}(x) u^{\frac{p+q}{2}}(y) \varphi(x) \varphi(y)dxdy\Big)^2 \\
\geq \,& cR^{-2N} \Big(\iint_{B_{2R}\times B_{2R}} u^{\frac{p+q}{2}}(x) u^{\frac{p+q}{2}}(y) \varphi(x) \varphi(y)dxdy\Big)^2,
\end{aligned}
$$
where $c>0$ is a constant.
Now, splitting the integrals according to $x$ and $y$ variables we deduce
$$
\int_{\R^N} \big(\Psi(|x|)*u^p\big)u^q dx \geq cR^{-N}
\Big(\int_{\R^N} u^{\frac{p+q}{2}}(x)  \varphi(x) dx  \Big)^2.
$$
Using this last inequality in \eqref{eqg2} we deduce \eqref{eqg1}.
\end{proof}
\medskip

\noindent{\bf Proof of Theorem \ref{th0} completed.}
We shall discuss separately the cases where \eqref{c1} or \eqref{c2} holds.

\noindent{\bf Case 1:} $N,m\geq 1$ and $p+q\geq 2$.
Let $\varphi$ be as in Lemma \ref{lg}.  By H\"older's inequality we have
$$
\begin{aligned}
\int_{\R^N} u\varphi&\leq 
\Big(  \int_{\R^N}u^{\frac{p+q}{2}}\varphi\Big)^{\frac{2}{p+q}}
\Big(  \int_{\R^N}\varphi \Big)^{1-\frac{2}{p+q}}\\
&\leq C R^{N(1-\frac{2}{p+q})} \Big(  \int_{\R^N} u^{\frac{p+q}{2}}\varphi\Big)^{\frac{2}{p+q}}.
\end{aligned}
$$
Hence,
$$
\Big(  \int_{\R^N} u^{\frac{p+q}{2}}\varphi \Big)^{2}\geq C R^{-N(p+q-2)} \Big(\int_{\R^N} u\varphi\Big)^{p+q}.
$$
Using this last estimate in \eqref{eqg1} we find 
$$
\int_{\R^N} u\varphi \leq CR^{N-\frac{2m}{p+q-1}}\quad\mbox{ for all }R>2.
$$
In particular, since $\varphi=1$ on $B_{R}$ we deduce
$$
\int_{B_R} u  dx =o(R^N)\quad\mbox{ as }R\to \infty. 
$$
By Lemma \ref{ln} it now follows that $u\equiv 0$ which concludes our proof in this case.

\medskip

\noindent{\bf Case 2:} $N>2m$ and $p\geq 1$.We apply Lemma \ref{lemv} for $v=-u$. It follows in particular that $v=-u\geq 0$ which yields $u\equiv 0$.
\qed

\section{Proof of Theorem \ref{thmain1}}

The proof of Theorem \ref{thmain1} follows from the result below which will also be useful in the study of the system \eqref{mainsystem}.
\begin{theorem}\label{thmainext}
 Assume $N,m\geq 1$ and let $\Phi$ and $\Psi$  satisfy \eqref{eqK}. Suppose  that $(u, v)$ is a non-negative solution of 
\begin{equation}\label{nlc}
\begin{cases}
(-\Delta)^m u \geq \big(\Phi(|x|)*v^{p_1}\big)v^{q_1}  \\[0.1in]
(-\Delta)^m v \geq \big(\Psi(|x|)*u^{p_2}\big)u^{q_2} 
\end{cases}
\quad\mbox{ in }\R^N, 
\end{equation}
where either
\begin{equation}\label{ss1}
p_1+q_1\geq 2\quad\mbox{ and }\quad p_2+q_2\geq 2,
\end{equation}
or
\begin{equation}\label{ss2}
p_1,p_2\geq 1.
\end{equation}

Then, for all $1\leq i\leq m$ we have 
$$
(-\Delta)^i u\geq 0 \quad \mbox{ and }\quad (-\Delta)^i v\geq 0\quad \mbox{ in }\R^N.
$$
\end{theorem}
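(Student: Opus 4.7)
The overall plan is to invoke Proposition~\ref{prep} on each of $u$ and $v$. Since $u,v\in C^{2m}(\R^N)$, the right-hand sides of \eqref{nlc} are non-negative continuous functions, and hence $f:=(-\Delta)^m u$ and $g:=(-\Delta)^m v$ are non-negative Radon measures in $L^1_{loc}(\R^N)$. To conclude that $u$ (and $v$) is weakly poly-superharmonic---which upgrades to the pointwise statement $(-\Delta)^i u\ge 0$ by $C^{2m}$ regularity---it suffices to verify the growth condition \eqref{rep0} with $\ell=0$.

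For case \eqref{ss2}, this is immediate from Lemma~\ref{lemv}: I would apply it to $u$ with exponent $p_2\ge 1$ and weight $\Psi$, and to $v$ with exponent $p_1\ge 1$ and weight $\Phi$, using the integrability bounds (the system analog of \eqref{eqkk}) that are part of the definition of a solution. Whenever $f\not\equiv 0$ (resp. $g\not\equiv 0$), Lemma~\ref{lemv} returns the integral representation, yielding Proposition~\ref{prep}(iii), equivalently (ii), hence poly-superharmonicity. The degenerate subcase $f\equiv 0$ reduces matters further: the first inequality of \eqref{nlc} then forces $(\Phi*v^{p_1})v^{q_1}\equiv 0$, whence $v\equiv 0$ (since $\Phi*v^{p_1}>0$ unless $v\equiv 0$), the second inequality trivialises, and the conclusion is automatic.

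For case \eqref{ss1}, I would adapt the integral estimate of Lemma~\ref{lg} to the coupled setting. With $\varphi(x)=\psi^{2m}(x/R)$ as there, multiplying the first inequality of \eqref{nlc} by $\varphi^2$ and integrating, using $|\Delta^m\varphi^2|\le CR^{-2m}\varphi$, the Cauchy--Schwarz symmetrisation
\begin{equation*}
\iint\Phi(|x-y|)v^{p_1}(x)v^{q_1}(y)\varphi^2(y)\,dx\,dy\ge\iint\Phi(|x-y|)v^{\frac{p_1+q_1}{2}}(x)v^{\frac{p_1+q_1}{2}}(y)\varphi(x)\varphi(y)\,dx\,dy,
\end{equation*}
and the bound $\Phi(4R)\ge cR^{-N}$ extracted from \eqref{eqK}, gives
\begin{equation*}
\int_{\R^N}u\,\varphi\,dx\ge cR^{-N+2m}\left(\int_{\R^N}v^{\frac{p_1+q_1}{2}}\varphi\,dx\right)^{\!2},
\end{equation*}
together with its analogue obtained by swapping $u$ and $v$. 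H\"older's inequality, valid because $p_1+q_1\ge 2$ and $p_2+q_2\ge 2$, provides the reverse comparisons $\int v\varphi\le CR^{N(1-2/(p_1+q_1))}(\int v^{(p_1+q_1)/2}\varphi)^{2/(p_1+q_1)}$ and similarly for $u$. Chaining the two lower bounds into each other via H\"older then produces
\begin{equation*}
\int_{\R^N}u\,\varphi\,dx\le CR^{N-\frac{2m(1+p_1+q_1)}{(p_1+q_1)(p_2+q_2)-1}},\quad \int_{\R^N}v\,\varphi\,dx\le CR^{N-\frac{2m(1+p_2+q_2)}{(p_1+q_1)(p_2+q_2)-1}}.
\end{equation*}
Both exponents are strictly less than $N$, so $\int_{B_R}u=o(R^N)$ and $\int_{B_R}v=o(R^N)$; a translation $x\mapsto x+x_0$ then yields \eqref{rep0} with $\ell=0$ at every $x_0$, and Proposition~\ref{prep} closes the argument.

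The main obstacle is the coupled bootstrap in case \eqref{ss1}: the nonlocal term ties $u$ and $v$ together asymmetrically (each inequality bounds a moment of one unknown by an integral of the other), so two rounds of substitution and careful H\"older book-keeping are needed before a genuine sub-$R^N$ decay emerges; the bootstrap closes precisely because $(p_1+q_1)(p_2+q_2)>1$, which is automatic under \eqref{ss1}. A secondary, mostly notational, issue is that Proposition~\ref{prep} is formulated for $N>2m$; in the complementary regime $N\le 2m$ the Liouville conclusion of Theorem~\ref{thmain2}(i) already forces $u\equiv v\equiv 0$ under the present hypotheses, so the statement is vacuously true.
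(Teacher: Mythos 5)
Your argument is sound in the regime $N>2m$, and there it is a genuinely different route from the paper: rather than arguing by contradiction with spherical averages, you verify the decay condition \eqref{rep0} with $\ell=0$ (via Lemma \ref{lemv} when $p_1,p_2\ge 1$, and via the coupled test-function/H\"older bootstrap when $p_1+q_1,\,p_2+q_2\ge 2$) and then read off weak poly-superharmonicity from Proposition \ref{prep}, upgrading to the pointwise inequalities by $C^{2m}$ regularity. The bootstrap exponents $N-\frac{2m(1+p_1+q_1)}{(p_1+q_1)(p_2+q_2)-1}$ and its companion are computed correctly (modulo the standing remark that the rearrangement requires $u\not\equiv 0$, $v\not\equiv 0$, the trivial case being immediate).

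The genuine gap is the regime $N\le 2m$, which the theorem explicitly includes since it assumes only $N,m\ge 1$. Proposition \ref{prep} and Lemma \ref{lemv} are stated only for $N>2m$, and your proposed fallback, Theorem \ref{thmain2}(i), fails on two counts. First, it is circular: the paper proves Theorem \ref{thmain2}(i) by invoking Theorem \ref{thmain1}, which is itself deduced from the very statement you are proving. Second, it covers only $1\le N\le 2$, whereas the uncovered range is all of $N\le 2m$; for $m\ge 2$ this leaves $3\le N\le 2m$ untouched, and there the claim is not vacuous: nothing in the paper (or in your argument) excludes nontrivial solutions when $2<N\le 2m$, so poly-superharmonicity must actually be proved in that range. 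The paper handles all $N,m\ge 1$ uniformly by a contradiction argument on spherical averages centred at a point where $(-\Delta)^{m-1}u<0$: for $m$ odd this forces $\bar u(r)\to-\infty$, while for $m$ even it forces $\bar u(r)\ge C_1r^{2(m-1)}-C_2$, a growth which is then contradicted either by the test-function estimate of Lemma \ref{lg} (when $p_i+q_i\ge 2$) or by the integrability condition \eqref{eqkk01} together with Jensen's inequality (when $p_i\ge 1$), and the argument is iterated down to $-\Delta u\ge 0$, $-\Delta v\ge 0$. Your proof would need an analogous device, independent of the $N>2m$ representation formula, to close the case $N\le 2m$.
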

\begin{proof} If $u\equiv 0$ (resp. $v\equiv 0$), then $v\equiv 0$ (resp. $u\equiv 0$), otherwise
\[
0\ge\int\int \Phi(|x-y|)v^{p_1}(y)v^{q_1}(y)dxdy>0
\]
which is a contradiction.
Hereafter we assume that $u\not\equiv 0$ and $v\not\equiv 0$.
The proof is divided in two steps. 
\medskip

\noindent{\bf Step 1: }
We have $(-\Delta)^{m-1}u\geq 0$ and $(-\Delta)^{m-1}v\geq 0$ in $\R^N$.

Assume by contradiction that there exists $x_0\in \R^N$ such that $(-\Delta)^{m-1}u(x_0)<0$. Let $u_i=(-\Delta)^i u$ and $v_i=(-\Delta)^i v$, $1\leq i\leq m$ and denote by $\bar u(r)$, $\bar v(r)$ (resp $\bar u_i(r)$, $\bar v_i(r)$) the spherical average of  $u$ and $v$  (resp $u_i$ and $v_i$) on the sphere $\partial B_r(x_0)$, that is,
$$
\bar u(r)=\fint_{\partial B_r(x_0)} u(y)d\sigma(y)\quad\mbox{ and }\quad \bar u_i(r)=\fint_{\partial B_r(x_0)} u_i(y)d\sigma(y),
$$
$$
\bar v(r)=\fint_{\partial B_r(x_0)} v(y)d\sigma(y)\quad\mbox{ and }\quad \bar v_i(r)=\fint_{\partial B_r(x_0)} v_i(y)d\sigma(y).
$$
Then
\begin{equation}\label{system}
\begin{cases}
-\Delta \bar u=\bar u_1\, ,\, \ \qquad\ \  -\Delta \bar v=\bar v_1\,,\\
-\Delta \bar u_1=\bar u_2\, , \qquad\ \  -\Delta \bar v_1=\bar v_2\,,\\
\cdots\cdots\cdots\cdots\\
-\Delta \bar u_{m-2}=\bar u_{m-1}\, , -\Delta \bar v_{m-2}=\bar v_{m-1}\,,\\
-\Delta \bar u_{m-1} \ge \di \fint_{\partial B_r(x_0)} \big(\Phi(|x|)*v^{p_1}\big)u^{q_1}  d\sigma\geq 0\, ,\\[0.15in]
-\Delta \bar v_{m-1} \geq \di \fint_{\partial B_r(x_0)} \big(\Psi(|x|)*u^{p_2}\big)v^{q_2}  d\sigma\geq 0. 
\end{cases}
\end{equation}
From \eqref{system} one has $-\Delta \bar u_{m-1} \ge 0$ which yields $-r^{1-N}\Big(r^{N-1}\bar u_{m-1}'\Big)'\geq 0$ for all $r>0$ and 
$$
\bar u_{m-1}(0)=(-\Delta)^{m-1}u(x_0)<0.
$$ 
By integration one gets
$$
\bar u'_{m-1}(r)\leq 0\quad \mbox{ and }\quad \bar u_{m-1}(r)\leq \bar u_{m-1}(0)=u_{m-1}(x_0)<0,
$$
for all $r\geq 0$.
We can rewrite the last estimate as
\begin{equation}\label{eqzero}
(-\Delta)^{m-1}  \bar u (r)\leq  (-\Delta)^{m-1} \bar u(0)<0 \quad\mbox{ for all }r\geq 0.
\end{equation}

\noindent{\bf Case 1: } $m$ is odd. From  \eqref{eqzero} one has
$$
\Delta^{m-1} \bar u(r)\leq \Delta^{m-1}\bar u(0)<0\quad\mbox{ for all }r\geq 0.
$$
Integrating twice the above inequality we obtain
$$
\Delta^{m-2} \bar u (r)\leq \Delta^{m-2} \bar u(0)+\frac{\Delta^{m-1}u(0)r^2}{2N} \quad\mbox{ for all }r\geq 0
$$
and proceeding further we deduce
\begin{equation}\label{eq1}
\bar u(r)\leq \bar u(0)+\sum_{k=1}^{m-1}\frac{\Delta^{k}\bar u(0)}{\di \Pi_{j=1}^k[(2j)(N+2j-2)]}r^{2k}.
\end{equation}
Since $\Delta^{m-1}\bar u(0)=(-\Delta )^{m-1}u(x_0)<0$, we deduce from \eqref{eq1} that 
$$
\bar u(r)\to -\infty\quad\mbox{ as } r\to \infty,
$$ 
which contradicts the fact that $u\geq 0$.

\noindent{\bf Case 2:} $m$ is even. Hence $m\geq 2$. From \eqref{eqzero} we find
$$
\Delta^{m-1}  \bar u (r)\geq  \Delta^{m-1} \bar u(0)>0 \quad\mbox{ for all }r\geq 0.
$$
In the same manner as we derived \eqref{eq1} it follows that  for any $1\leq i\leq m$ one has
$$
\Delta^{m-i} \bar u(r)\geq \Delta^{m-i} \bar  u(0)+\sum_{k=1}^{i-1}\frac{\Delta^{m-i+k}\bar u(0)}{\di \Pi_{j=1}^k[(2j)(N+2j-2)]}r^{2k}.
$$
In particular, for $i=m$ we find
$$
\bar u(r)\geq \bar u(0)+\sum_{k=1}^{m-1}\frac{\Delta^{k}\bar u(0)}{\di \Pi_{j=1}^k[(2j)(N+2j-2)]}r^{2k}.
$$
Since $\Delta^{m-1}\bar u(0)>0$ and $m\geq 2$ it follows from the above estimate that 

\begin{equation}\label{S3-E2}
\bu(r)\ge C_1 r^{2(m-1)}-C_2\quad\mbox{ for all }r\geq 0,
\end{equation}
for some constants $C_1, C_2>0$.

\noindent{\bf Case 2a:} Assume that \eqref{ss1} holds. 
Let $\varphi$ be as in Lemma \ref{lg}.
In the same way as in the proof of \eqref{eqg1} we have
\begin{equation}\label{eqg3}
\begin{cases}
\displaystyle \int_{\R^N} u\varphi dx\geq CR^{-N+2m}\Big(\int_{\R^N} v^{\frac{\tau}{2}}\varphi   dx\Big)^2,\\[0.2in]
\displaystyle \int_{\R^N} v\varphi dx\geq CR^{-N+2m}\Big(\int_{\R^N} u^{\frac{\theta}{2} }\varphi   dx\Big)^2,\\
\end{cases}
\end{equation}
where
$$
\tau=p_1+q_1\geq 2\quad\mbox{ and }\quad  \theta=p_2+q_2\geq 2.
$$
By H\"older's inequality we find
\begin{equation}\label{eqg4}
\begin{cases}
\displaystyle 
\Big(\int_{\R^N} v^{\frac{\tau}{2}}\varphi   dx\Big)^2\geq C R^{-N(\tau -2)} \Big(\int_{\R^N} v\varphi   dx\Big)^{\tau},\\[0.2in]
\displaystyle 
\Big(\int_{\R^N} u^{\frac{\theta}{2}}\varphi   dx\Big)^2\geq C R^{-N(\theta -2)} \Big(\int_{\R^N} u\varphi   dx\Big)^{\theta}.
\end{cases}
\end{equation}
From \eqref{eqg3} and \eqref{eqg4} we deduce
\begin{equation}\label{eqg5}
\begin{cases}
\displaystyle \int_{\R^N} u\varphi dx\geq C R^{-N+2m-N(\tau -2)} \Big(\int_{\R^N} v\varphi   dx\Big)^{\tau},\\[0.2in]
\displaystyle \int_{\R^N} v\varphi dx\geq CR^{-N+2m-N(\theta -2)} \Big(\int_{\R^N} u\varphi   dx\Big)^{\theta}.\\
\end{cases}
\end{equation}
We use the second estimate of \eqref{eqg5} in the first one to obtain
$$
\int_{\R^N} u\varphi dx\geq C R^{-N+2m-N(\tau -2)-(N-2m)\tau-N(\theta -2)\tau } 
\Big(\int_{\R^N} u\varphi   dx\Big)^{\tau \theta},
$$
which we arrange as
$$
CR^{N-\frac{2m(1+\tau)}{\tau \theta-1}}\geq \int_{B_R} u dx.
$$
Using \eqref{S3-E2} we find
$$
\begin{aligned}
CR^{N-\frac{2m(1+\tau)}{\tau \theta-1}}&\geq 
\int_{B_R}udx
=\sigma_{N}\int_0^Rr^{N-1}\bu dr\\
& \ge C_3 R^{N+2(m-1)} -C_4R^N \quad\mbox{ for $R>1$ large},
\end{aligned}
$$
where $C_3, C_4>0$ and $\sigma_N$ denotes the surface area of the unit sphere in $\R^N$. Comparing the exponents of $R$ in the above inequality we raise a contradiction, since $C_3>0$.  This finishes the proof of Step 1 in this case.

\noindent{\bf Case 2b:} Assume that \eqref{ss2} holds. From \eqref{S3-E2} one can find $r_0>0$ and a constant $c>0$ such that
\begin{equation}\label{cont1}
\bu(r)\ge cr^{2(m-1)}\quad\mbox{ for all }r\geq r_0.
\end{equation}
Using the fact that $r^N\Psi(r)\to \infty$ as $r\to \infty$, by taking $r_0>1$ large enough we may also assume that
\begin{equation}\label{cont2}
r^N\Psi\Big(\frac{r}{2}\Big)\geq 1 \quad\mbox{ for all }r\geq r_0.
\end{equation}
To raise a contradiction, we next return to condition \eqref{eqkk} for $u$ (in which we replace $p$ with $p_2$). From \eqref{cont1}-\eqref{cont2}, co-area formula and Jensen's inequality we obtain:

$$
\begin{aligned}
\infty>\int_{|y|>r_0}u^{p_2}(y)\Psi\Big(\frac{|y|}{2}\Big) dy & =\int_{r_0}^\infty \int_{|y|=r} u^{p_2}(y)\Psi\Big(\frac{|y|}{2}\Big) d\sigma(y)\, dr\\[0.1in]
&=\int_{r_0}^\infty \Psi\Big(\frac{r}{2}\Big) \int_{|y|=r} u^{p_2}(y) d\sigma(y)\, dr\\[0.1in]
&\geq \sigma_N\int_{r_0}^\infty r^{N-1} \Psi\Big(\frac{|r|}{2}\Big)\bu^{p_2}(r)   dr\\[0.1in]
&\geq C\int_{r_0}^\infty r^{2p_2(m-1)-1} r^{N} \Psi\Big(\frac{|r|}{2}\Big)  dr\\
&\geq C\int_{r_0}^\infty r^{2p_2(m-1)-1}   dr=\infty, 
\end{aligned}
$$
which is a  contradiction and concludes  the proof in Step 1.

\medskip

\noindent{\bf Step 2: }
We have $(-\Delta)^{m-i}u\geq 0$ and $(-\Delta)^{m-i}v\geq 0$ in $\R^N$ for any $1\leq i\leq m$.

From Step 1 we know that $(-\Delta)^{m-1}u\geq 0$, $(-\Delta)^{m-1}v\geq 0$ in $\R^N$. Letting $u_{m-2}=(-\Delta)^{m-2}u$ and $v_{m-2}=(-\Delta)^{m-2}v$ we want to show next that $u_{m-2}\geq 0$ and $v_{m-2}\geq 0$ in $\R^N$. Suppose to the contrary that there exists $x_0\in \R^N$ so that $u_{m-2}(x_0)<0$. We next take the spherical average with respect to spheres centred at $x_0$ and proceed as in Step 1 by discussing separately the cases $m$ is odd and $m$ is even in order to raise a contradiction. Thus, $(-\Delta)^{m-2}u\geq 0$, $(-\Delta)^{m-2}v\geq 0$ in $\R^N$. We proceed further until we get $-\Delta u\geq 0$, $-\Delta v \geq 0$ in $\R^N$.  
\end{proof}

\begin{proof}[Proof of Theorem~\ref{thmain1}]
Let $\Psi=\Phi$ and $(p_1,q_1)=(p_2,q_2)=(p,q)$.
Suppose $u$ is a nonnegative solution of (\ref{main}).
If $u\equiv 0$, then the conclusion clearly holds.
If $u\not\equiv 0$, then $(u,u)$ is a non-negative solution of (\ref{nlc}). 
By Theorem~\ref{thmainext} we see that, for all $1\le i\le m$, $(-\Delta)^iu\ge 0$ in $\R^N$.
\end{proof}

\section{Proof of Theorem \ref{thmain2}}

(i) By Theorem \ref{thmain1} we see that, for $1\le j\le m$, $(-\Delta)^ju\ge 0$ in $\R^N$.
In particular $-\Delta u\ge 0$ in $\R^N$.
Since $N=1,2$, it is well known that a nonnegative superharmonic function is constant.
Thus, $u=c$ in $\R^N$.
By (\ref{main}) it follows that $(\Psi(|x|)*u^p)u^q=0$ in $\R^N$.
This clearly yields $u=0$ in $\R^N$, otherwise there would exist $x_0\in\R^N$ such that $u(x_0)>0$ and hence $(\Psi(|x|)*u^p)u^q>0$ at $x_0$.

(ii1) By estimate \eqref{inqq9} in Proposition \ref{lemv}
we find $u\geq c|x|^{2m-N}$ in $\R^n\setminus B_1$, for some $c>0$.  Thus, by the hypothesis (ii1) in Theorem \ref{thmain2} we find
$$
\int_{|y|>1}
u^p(y)\Psi\Big(\frac{|y|}{2}\Big)dy\geq c\int_{|y|>1}|y|^{-p(N-2m)}\Psi(|y|)dy=\infty,
$$
which contradicts \eqref{eqkk}. 

The proof of part (ii2) follows from the proof of Theorem \ref{S4T2}.
\qed

\section{Proof of Theorem \ref{thmain3}} 

Assume first that \eqref{eqpqm1} has a non-negative solution $u\not\equiv 0$. Then, by estimate \eqref{inqq9} in Proposition \ref{lemv} we have $u\geq c|x|^{2m-N}$ in $\R^N\setminus B_1$, where $c>0$ is a constant. It is easy to check that the condition $p>(N-\alpha)/(N-2m)$ and \eqref{eqpqm2}$_2$ follow from Theorem \ref{thmain2} with $\Psi(r)=r^{-\alpha}$. 

It remains to prove that $q>(N-\alpha)/(N-2m)$. If $\alpha\geq 2m$ then this is clearly true,  since $q>1$.   Assume next that $\alpha<2m$. 

For $x\in \R^N\setminus B_1$ and $1<|y|<2$ we have $|x-y|\leq 3|x|$. Thus, 
$$
\begin{aligned}
|x|^{-\alpha}*u^p&\geq \int_{\R^N} \frac{f(y)}{|x-y|^{\alpha}} dy\geq \int_{1< |y|< 2} \frac{u^p(y)}{|x-y|^{\alpha}} dy\\
&\geq  \int_{1< |y|< 2} \frac{u^p(y)dy}{(3|x|)^{\alpha}}\geq C |x|^{-\alpha}.
\end{aligned}
$$
Thus, $|x|^{-\alpha}*u^p\geq C|x|^{-\alpha}$ for all $x\in \R^N\setminus B_1$ and $u$ satisfies
$$
(-\Delta)^m u\geq c|x|^{-\alpha}u^q\quad \mbox{ in } \R^N\setminus B_1,
$$
for some $c>0$. We know (see, e.g., \cite[Example 5.2]{MP2001}) that the above inequality has no solutions $u\geq 0$, $u\not\equiv 0$,  if $\alpha<2m$ and $1<q\leq (N-\alpha)/(N-2m)$.  Hence $q>(N-\alpha)/(N-2m)$.

Assume now that \eqref{eqpqm2} holds and let us construct a positive solution to \eqref{eqpqm1}. First, we write \eqref{eqpqm2} in the form
$$
(N-2m)(p+q-1)>N-\alpha+2m\quad (N-2m)p>N-\alpha \quad \mbox{ and }\quad (N-2m)(q-1)>2m-\alpha.
$$
Thus, we can choose $\kappa\in (0, N-2m)$ such that 
\begin{equation}\label{eqka}
\begin{cases}
\kappa (p+q-1)>N-\alpha+2m,\\
\kappa p>N-\alpha,\\
\kappa (q-1)>2m-\alpha,\\
p \kappa\neq N. 
\end{cases}
\end{equation}
For $a\geq 0$ we define 
$$
F(a,x)=(-\Delta)^m \Big\{(a+|x|^2)^{-\kappa/2}  \Big\}\quad\mbox{ for all }x\in \R^N\setminus\{0\}.
$$
Then,
$$
F(a,x)=(a+|x|^2)^{-\frac{\kappa}{2}-2m}\sum_{j=0}^m b_j(a)|x|^{2j}
\quad\mbox{ for all }x\in \R^N,
$$
where $b_j(a)\in \R$. In particular, for $a=0$ we find
\begin{equation}\label{not1}
F(0,x)=|x|^{-\kappa-4m}\sum_{j=0}^m b_j(0)|x|^{2j}
\quad\mbox{ for all }x\in \R^N\setminus\{0\}.
\end{equation}
On the other hand, by direct computation one has
\begin{equation}\label{not2}
F(0,x)=(-\Delta)^m \Big\{|x|^{-\kappa}  \Big\}=\prod_{j=1}^m\Big[(\kappa+2j-2)(N-\kappa-2j)\Big]|x|^{-\kappa-2m}>0,
\end{equation}
since $0<\kappa<N-2m$. Comparing \eqref{not1} and \eqref{not2} we find $b_{2m}(0)>0$. By the continuous dependence on the data, we can find now $a>0$ such that $b_{2m}(a)>0$. Also,
$$
\lim_{|x|\to \infty}\frac{F(a,x)}{|x|^{-k-2m}}=b_{2m}(a)>0.
$$
Thus, there exist $c>0$ and $R>1$ such that $F(a,x)\geq c|x|^{-\kappa-2m}$ for $x\in \R^N\setminus B_R$.

Let now $v(x)=(a+|x|^2)^{-\kappa/2}$, where $a>0$ satisfies $b_{2m}(a)>0$. By the above estimates we have
\begin{equation}\label{eqe1}
(-\Delta)^m v\geq c|x|^{-\kappa-2m}\quad\mbox{ in } \; \R^N\setminus B_R.
\end{equation}
Let $\varphi\in C^1_c(\R^N)$, $0\leq \varphi\leq 1$ such that supp $\varphi\subset B_{2R}$ and $\varphi\equiv 1$ on $B_R$.
For $M>1$ define
$$
V(x)=v(x)+M \gamma_0\int_{\R^N}\frac{\varphi(y)}{|x-y|^{N-2m}}dy \quad\mbox{ for all }x\in \R^N,
$$
where $\gamma_0>0$ is  a normalizing constant such that
$$
(-\Delta)^m\big(\gamma_0 |x|^{2m-N}\big)=\delta_0\quad\mbox{ in }{\mathcal D}'(\R^N),
$$
and $\delta_0$ denotes the Dirac mass concentrated at the origin.

Thus, $V\in C^{2m}(\R^N)$, $V>0$ in $\R^N$ and from  \eqref{eqe1} we have
\begin{equation}\label{eqe2}
(-\Delta)^m V\geq c|x|^{-\kappa-2m}\quad\mbox{ in } \; \R^N\setminus B_{R}.
\end{equation}
Also, by taking $M>1$ large enough we have
\begin{equation}\label{eqe3}
(-\Delta)^m V=(-\Delta)^mv+M>0 \quad\mbox{ in } \; \overline B_R.
\end{equation}
Observe that for $x\in \R^N\setminus B_{4R}$ and $y\in B_{2R}$ we have $|x-y|\geq |x|-|y|\geq |x|/2$. Thus,
$$
\begin{aligned}
\int_{\R^N}\frac{\varphi(y)}{|x-y|^{N-2m}}dy&=\int_{B_{2R}} \frac{\varphi(y)}{|x-y|^{N-2m}}dy\\
&\leq 2^{N-2m}|x|^{2m-N}\int_{B_{2R}}\varphi(y) dy\\
&\leq C|x|^{2m-N}.
\end{aligned}
$$
Using this estimate in the definition of $V$ together with $0<\kappa<N-2m$ it follows that
\begin{equation}\label{eqe4}
V(x)\leq C_0|x|^{-\kappa} \quad\mbox{ for all }x\in \R^N\setminus B_{R/2},
\end{equation}
for some constant $C_0>0$.

We next evaluate the convolution term $(|x|^{-\alpha}*V^p)V^q$ and indicate how to construct a positive solution to \eqref{eqpqm1}. Using \eqref{eqe4} we can apply Lemma \ref{lbas} for $f=V^p$, $\beta=\kappa p>N-\alpha$ and $\rho=R/2$. It follows that for any $x\in \R^N\setminus B_R$ we have

$$
(|x|^{-\alpha}*V^p)V^q\leq c|x|^{-\kappa q}\int_{\R^N}\frac{V(y)^pdy}{|x-y|^{\alpha}}
\leq C\left\{
\begin{aligned}
&|x|^{N-\alpha-\kappa (p+q)}&&\quad\mbox{ if }\kappa p<N,\\
&|x|^{-\alpha-\kappa q} &&\quad\mbox{ if }\kappa p>N.
\end{aligned}
\right.
$$ 
Using this last estimate together with \eqref{eqe2} and \eqref{eqka}$_1$, \eqref{eqka}$_3$  we deduce
\begin{equation}\label{eeee1}
(-\Delta)^m V\geq C_1 (|x|^{-\alpha}*V^p)V^q \quad\mbox{ in } \; \R^N\setminus B_R,
\end{equation}
for some $C_1>0$. 
Since $(-\Delta)^m V$ and $(|x|^{-\alpha}*V^p)V^q $ are continuous and positive functions on the compact $\overline B_R$ (see \eqref{eqe3}), one can find $C_2>0$ such that 
\begin{equation}\label{eeee2}
(-\Delta)^m V\geq C_2 (|x|^{-\alpha}*V^p)V^q \quad\mbox{ in } \;  B_R.
\end{equation}
Thus, letting $C=\min\{C_1,C_2\}>0$ and $U=C^{1/(p+q-1)}V$, it follows that $U\in C^{2m}(\R^N)$ is positive and that from \eqref{eeee1}-\eqref{eeee2} one has 
$$
(-\Delta)^m U\geq  (|x|^{-\alpha}*U^p)U^q \quad\mbox{ in } \; \R^N,
$$
which concludes our proof.
\qed

\section{Proof of Theorem \ref{S4T2} and Theorem \ref{S4T3} }


\noindent {\bf Proof of Theorem \ref{S4T2}.} Let $L>0$ denote the positive limit in \eqref{S4T2E0}. Thus, one can find an increasing sequence $\{R_i\}\subset (0, \infty)$ that tends to infinity and such that for all $i\geq 1$ one has
\begin{equation}\label{ri1}
\min\Big\{ R_i^{2N-(N-2m)(p_{k\ell}+q_{k\ell})}\Psi_{k\ell}(R_i)\,,\, 
r^{2N-(N-2m)(p_{\ell k}+q_{\ell k})}\Psi_{\ell k}(R_i)\Big\}>\frac{L}{2}.
\end{equation}
By Theorem \ref{thmainext} we deduce that $u_k$ and $u_\ell$ are poly-superharmonic. Further, by estimate \eqref{inqq9} in Proposition \ref{lemv} there exists $c>0$ such that 
\begin{equation}\label{esst}
u_k(x), u_\ell(x)\geq c|x|^{2m-N}\quad\mbox{ in }\R^N\setminus B_1.
\end{equation}

Let $\phi$ be the positive eigenfunction of $-\Delta$ in the unit ball $\overline B_1$ corresponding to the eigenvalue $\lambda_1>0$.
We normalize $\phi$ such that $0\le\phi\le 1$ in $B_1$ and $\max_{\overline B_1}\phi(x)=1$.
Let $\varphi_{i}(x)=\phi(x/R_i)$.
Multiplying by $\varphi_{i}$ in the inequality of \eqref{mainsystem} that corresponds to $u_k$ we find
\begin{align*}
\int_{B_{R_i}} \left(\Psi_{k\ell}(|x|)*u_{\ell}^{p_{k\ell}}\right) u_\ell^{q_{k\ell }}\varphi_i
&\le\int_{B_{R_i}}(-\Delta)^mu_k\varphi_i\\
&=\int_{B_{R_i}}(-\Delta)^{m-1}u_k(-\Delta)\varphi_i
+\int_{\partial B_{R_i}}(-\Delta)^{m-1}u_k\frac{\partial\varphi_i}{\partial n}\\
&\le\frac{\lambda_1}{R_i^2}\int_{B_{R_i}}(-\Delta)^{m-1}u_k\varphi_i,
\end{align*}
where we used $(-\Delta)^{m-1}u_k\ge 0$ by Theorem~\ref{thmainext} and that, by Hopf lemma, $\partial\varphi_i/\partial n<0$ on $\partial B_{R_i}$.
Proceeding further one finds
\begin{equation}\label{S4L1E2}
\int_{B_{R_i}} \left(\Psi_{k\ell}(|x|)*u_{\ell}^{p_{k\ell}}\right) u_\ell^{q_{k\ell }}\varphi_i
\le\left(\frac{\lambda_1}{R_i^2}\right)^m
\int_{B_{R_i}}u_k\varphi_i.
\end{equation}
Let us next estimate the integral in the left-hand side of (\ref{S4L1E2}).
If $x\in B_{R_i}$, then  one  has
\[
\Psi_{k\ell}(|x|)*u_{\ell}^{p_{k\ell}} \ge
\int_{B_{R_i}}\Psi_{k\ell}(|x-y|)u_\ell^{p_{k\ell}}(y)dy
\ge \Psi_{k\ell}(2R_i)\int_{B_{R_i}}u_\ell^{p_{k\ell}}(y)dy.
\]
Thus, by the fact that $0\le\varphi_i\le 1$ one has
\begin{equation}\label{S4L1E3}
\int_{B_{R_i}} \left(\Psi_{k\ell}(|x|)*u_{\ell}^{p_{k\ell}}\right) u_\ell^{q_{k\ell }}\varphi_i
\ge \Psi_{k\ell}(2R_i)
\left(\int_{B_{R_i}}u_\ell^{p_{k\ell}}\varphi_i\right)
\left(\int_{B_{R_i}}u_\ell^{q_{k\ell}}\varphi_i\right).
\end{equation}
Combining (\ref{S4L1E3}) with (\ref{S4L1E2}) we obtain
\begin{equation}\label{S4L1E4}
\Psi_{k\ell}(2R_i)\left(\int_{B_{R_i}}u_{\ell}^{p_{k\ell}}\varphi_i\right)\left(\int_{B_{R_i}}u_{\ell}^{q_{k\ell}}\varphi_i\right)
\le\left(\frac{\lambda_1}{R_i^2}\right)^m\int_{B_{R_i}}u_k\varphi_i.
\end{equation}

Let $\tau=p_{k\ell}+q_{k\ell}\ge 2$.
By H\"{o}lder's inequality we have
\begin{equation}\label{S4L1E5}
\left(\int_{B_{R_i}}u_{\ell}^{\frac{\tau}{2}} \varphi_i\right)^2
\le
\left(\int_{B_{R_i}}u_{\ell}^{p_{k\ell}}\varphi_i\right)
\left(\int_{B_{R_i}}u_{\ell}^{q_{k\ell}}\varphi_i\right).
\end{equation}
Again by H\"{o}lder's inequality we derive
\[
\int_{B_{R_i}}u_{\ell}\varphi_i
\le
\left(\int_{B_{R_i}}u_{\ell}^{\frac{\tau}{2}}\varphi_i\right)^{\frac{2}{\tau}}
\left(\int_{B_{R_i}}\varphi_i\right)^{1-\frac{2}{\tau}}
\le
CR_i^{N\left(1-\frac{2}{\tau}\right)}
\left(\int_{B_{R_i}}u_{\ell}^{\frac{\tau}{2}}\varphi_i\right)^{\frac{2}{\tau}},
\]
and hence
\begin{equation}\label{S4L1E5+}
\left(\int_{B_{R_i}}u_{\ell}\varphi_i\right)^{\tau}
\le
CR_i^{N(\tau-2)}
\left(\int_{B_{R_i}}u_{\ell}^{\frac{\tau}{2}}\varphi_i\right)^2.
\end{equation}
By (\ref{S4L1E4}), (\ref{S4L1E5}) and (\ref{S4L1E5+}) we have
\[
R_i^{2m-N(\tau-2)}\Psi_{k\ell}(2R_i)
\left(\int_{B_{R_i}}u_{\ell}\varphi_i\right)^{\tau}
\le
C\int_{B_{R_i}}u_k\varphi_i,
\]
which we write it as
\[
R_i^{2N-(N-2m)\tau}\Psi_{k\ell}(2R_i)
\left(\int_{B_{R_i}}u_{\ell}\varphi_i\right)^{\tau}
\le
CR_i^{2m(\tau-1)}
\int_{B_{R_i}}u_k\varphi_i
\]

Using \eqref{ri1} it follows that for $i\geq 1$ large enough we have
\begin{equation}\label{S4L1E6a}
\frac{L}{2}
\left(\int_{B_{R_i}}u_{\ell}\varphi_i\right)^{p_{k\ell}+q_{k\ell}}
\le CR_i^{2m(p_{k\ell}+q_{k\ell}-1)}\int_{B_{R_i}}u_k\varphi_i.
\end{equation}
Similarly,  we have
\begin{equation}\label{S4L1E6b}
\frac{L}{2}
\left(\int_{B_{R_i}}u_{k}\varphi_i\right)^{p_{\ell k}+q_{\ell k}}
\le CR_i^{2m(p_{\ell k}+q_{\ell k}-1)}\int_{B_{R_i}}u_\ell\varphi_i.
\end{equation}
Multiplying (\ref{S4L1E6a}) with (\ref{S4L1E6b}) and using the fact that 
\[
\left( \int_{B_{R_i}}u_{k}\varphi_i \right)
\left( \int_{B_{R_i}}u_{\ell}\varphi_i \right)>0
\ \ \textrm{for large}\ i,
\]
we have
$$
\left( \int_{B_{R_i}}u_{\ell}\varphi_i \right)^{p_{k\ell}+q_{k\ell}-1}
\left( \int_{B_{R_i}}u_{k}\varphi_i \right)^{p_{\ell k}+q_{\ell k}-1}
\le C R_i^{2m(p_{k\ell}+q_{k\ell}-1)}R_i^{2m(p_{\ell k}+q_{\ell k}-1)}.
$$
From here we deduce that there exists a subsequence $\{R_i\}$ (still denoted in the following by $\{R_i\}$) such that\footnote{We make use of the following basic argument: if $a_ib_i\leq x_iy_i$ then either $a_i\leq x_i$ or $b_i\leq y_i$ (we argue by contradiction to prove this fact). Since $i\geq 1$ can be any (large) positive integer, along a subsequence we have either $a_i\leq x_i$ or $b_i\leq y_i$.}

\begin{itemize}
\item  either $\displaystyle \left( \int_{B_{R_i}}u_{\ell}\varphi_i \right)^{p_{k\ell}+q_{k\ell}-1}\leq C R_i^{2m(p_{k\ell}+q_{k\ell}-1)}$;
\item or 
$
\displaystyle 
\left( \int_{B_{R_i}}u_{k}\varphi_i \right)^{p_{\ell k}+q_{\ell k}-1}
\le C R_i^{2m(p_{\ell k}+q_{\ell k}-1)}$.
\end{itemize}
Assume the second assertion holds. This yields
$$
\int_{B_{R_i}}u_{k}\varphi_i 
\le C R_i^{2m}.
$$
Using this last estimate in \eqref{S4L1E2} we deduce 
$\left(\Psi_{k\ell}(|x|)*u_{\ell}^{p_{k\ell}}\right)u_{\ell}^{q_{k\ell}}\in L^1(\RN)$, and so
\begin{equation}\label{S4L1E9}
\int_{B_{R_i}\setminus B_{R_i/2}}\left(\Psi_{k\ell}(|x|)*u_{\ell}^{p_{k\ell}}\right)u_{\ell}^{q_{k\ell}}\varphi_i
\to 0\ \ \textrm{as}\ \ i\to\infty.
\end{equation}
We may estimate the above integral as we did in (\ref{S4L1E3}) to obtain
\[
\int_{B_{R_i}\setminus B_{R_i/2}}\left(\Psi_{k\ell}(|x|)*u_{\ell}^{p_{k\ell}}\right)u_{\ell}^{q_{k\ell}}\varphi_i
\ge \Psi_{k\ell}(2R_i)
\left(\int_{B_{R_i}\setminus B_{R_i/2}}u_{\ell}^{p_{k\ell}}\varphi_i\right)
\left(\int_{B_{R_i}\setminus B_{R_i/2}}u_{\ell}^{q_{k\ell}}\varphi_i\right).
\]
Finally, we use \eqref{esst} in the above inequality to deduce
\[
\int_{B_{R_i}\setminus B_{R_i/2}}\left(\Psi_{k\ell}(|x|)*u_{\ell}^{p_{k\ell}}\right)u_{\ell}^{q_{k\ell}}
\ge CR_i^{2N-(N-2m)(p_{k\ell}+q_{k\ell})}\Psi_{k\ell}(2R_i)>CL>0,
\]
for large $i$, which contradicts (\ref{S4L1E9}) and concludes our proof.
\qed

\medskip

The proofs of Corollaries \ref{corol1} and  \ref{corol4}~(i) follow immediately. 

\bigskip

\noindent{\bf Proof of Theorem \ref{S4T3}.} The proof of Theorem \ref{S4T3} can be carried out in the same way as above. The only difference is that we cannot apply Theorem \ref{thmainext} to derive that $u_k$ and $u_\ell$ satisfy \eqref{esst}. Instead, 
we apply Lemma \ref{lemv} to deduce that $u_k$ and $u_\ell$ are poly-superharmonic. Further, by the estimate 
\eqref{inqq9} one has that \eqref{esst} holds. From now on, we follow the above proof line by line. \qed

\bigskip

\noindent{\bf Proof of Corollary \ref{corol4}~(ii).} Let $(u_1,\ldots,u_n)$ be a nontrivial nonnegative solution of (\ref{AT1E0}).
Assume, without loss of generality, that $u_1\not\equiv 0$.
Let $i\in\{2,\ldots,n\}$.
Since $e_{1i}=e_{i1}=1$, by Theorem~\ref{S4T3} we see that $u_i\equiv 0$.
This indicates that $u_i\equiv 0$ for each $i\in\{2,\ldots,n\}$.
The conclusion holds. \qed

\bigskip

\noindent{\bf Acknowledgment.} The authors would like to thank the anonymus referee for the careful reading of our manuscript and for the suggestions which led to an improvement of our presentation.

\end{document}